\theoremstyle{plain}
\newtheorem{theorem}{Theorem}[section]
\newtheorem{proposition}[theorem]{Proposition}
\newtheorem{lemma}[theorem]{Lemma}
\newtheorem{remark}[theorem]{Remark}
\newtheorem{definition}[theorem]{Definition}
\newtheorem{notation}[theorem]{Notation}
\newtheorem{main theorem}[theorem]{Main Theorem}
\newtheorem{claim}[theorem]{Claim}
\newcommand{\interior}{\operatorname{int}}
\newcommand{\length}{\operatorname{\ell}}
\newcommand{\Lk}{\operatorname{Lk}}
\newcommand{\RR}{\mathbb{R}}
\newcommand{\EE}{\mathbb{E}}
\newcommand{\clk}{\mathrm{lk}} 
\newcommand{\ost}{\mathrm{st}}
\newcommand{\cst}{\mathrm{St}}
\newcommand{\BB}{\mathcal{B}}
\renewcommand\subsection{\@startsection{subsection}{2}{0mm}
    {-10.5dd plus-8pt minus-4pt}{10.5dd}
     {\normalsize\upshape}}
\begin{document}
\title[Convexity in CAT(0) cubed complexes]
{Combinatorial local convexity implies convexity
in finite dimensional CAT(0) cubed complexes}
\author{Shunsuke Sakai}
\address{Gifu Higashi High School\\
4-17-1, noisshiki, Gifu City
500-8765, Japan}
\email{shunsuke463@gmail.com}

\author{Makoto Sakuma}
\address{Advanced Mathematical Institute\\
Osaka Metropolitan University\\
3-3-138, Sugimoto, Sumiyoshi, Osaka City
558-8585, Japan}
\address{Department of Mathematics\\
Faculty of Science\\
Hiroshima University\\
Higashi-Hiroshima, 739-8526, Japan}
\email{sakuma@hiroshima-u.ac.jp}

\subjclass[2010]{Primary 53C23}
\keywords{convex, locally convex, combinatorially locally convex, cubed complex, CAT(0) space}

\begin{abstract}
We give a proof of the following theorem,
which is well-known among experts:
A connected subcomplex $W$ of 
a finite dimensional CAT(0) cubed complex $X$ is convex
if and only if $\Lk(v,W)$ is a full subcomplex of $\Lk(v,X)$
for every vertex $v$ of $W$.
\end{abstract}

\maketitle

\section{introduction}
\label{sec:introduction}

The purpose of this note is to give a proof of the following theorem,
which is well-known among experts.

\begin{theorem}
\label{thm:convex}
Let $X$ be a finite dimensional
CAT(0) cubed complex and $W$ a connected subcomplex of $X$. 
Then $W$ is convex in $X$ if and only if it satisfies the condition (CLC) below:

{\rm (CLC)} $\Lk(v,W)$ is a full subcomplex of $\Lk(v,X)$
for every vertex $v$ of $W$.
\end{theorem}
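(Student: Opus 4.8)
The plan is to prove the two implications separately, treating ``$W$ convex $\Rightarrow$ (CLC)'' first as the easier half. Assume $W$ is convex, fix a vertex $v$ of $W$, and let $\xi_0,\dots,\xi_k$ be vertices of $\Lk(v,W)$ spanning a simplex $\tau$ of $\Lk(v,X)$; I must produce $\tau$ inside $\Lk(v,W)$. The simplex $\tau$ is represented by a unique $(k+1)$-cube $C$ of $X$ containing $v$ whose edges at $v$ are the edges $e_0,\dots,e_k$ represented by $\xi_0,\dots,\xi_k$, and since each $e_i\subseteq W$ it suffices to show $C\subseteq W$. I would argue this by induction on $k$. The case $k=0$ is immediate. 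For $k\ge 1$ the two codimension-one faces $C'=\langle e_0,\dots,e_{k-1}\rangle$ and $C''=\langle e_1,\dots,e_k\rangle$ of $C$ represent facets of $\tau$, so they lie in $W$ by the inductive hypothesis, and hence so do their barycentres $b'$ and $b''$. A cube is a convex subset of a CAT(0) cubed complex, so the geodesic $[b',b'']$ lies in $C$, where it is simply the Euclidean segment between the two barycentres; a one-line computation in $C\cong[0,1]^{k+1}$ shows that this segment meets the interior of $C$. Since $W$ is convex it contains $[b',b'']$, and since $W$ is a subcomplex meeting the interior of $C$ it must contain $C$; thus $\tau\in\Lk(v,W)$, proving (CLC).

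For the converse I would separate a local step and a globalization step. The \emph{local step}, which I expect to be the main obstacle, is to show that (CLC) forces $W$ to be metrically locally convex in $X$: every point $p$ of $W$ admits $\epsilon>0$ with $B(p,\epsilon)\cap W$ convex in $X$. If $p$ lies in the interior of a cube $\sigma$ of $W$ (in particular if $p$ is a vertex, taking $\sigma=\{p\}$), then a small ball about $p$ in $X$ is isometric to a small ball about the cone point in $\RR^{\dim\sigma}\times C\bigl(\Lk(\sigma,X)\bigr)$, where $\Lk(\sigma,X)$ carries the all-right spherical metric and is therefore a CAT(1) flag complex, and under this isometry $W$ corresponds to $\RR^{\dim\sigma}\times C\bigl(\Lk(\sigma,W)\bigr)$. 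One first checks that (CLC) propagates from vertices to cubes: since $\Lk(\sigma,\,\cdot\,)=\Lk(\tau,\Lk(v,\,\cdot\,))$ for a vertex $v$ of $\sigma$ (with $\tau$ the simplex of $\Lk(v,X)$ representing $\sigma$), and since taking the link of a common simplex preserves fullness, $\Lk(\sigma,W)$ is a full subcomplex of $\Lk(\sigma,X)$. The local step then reduces, via the geometry of Euclidean cones, to the combinatorial heart of the argument --- essentially the content of Theorem~\ref{prop:local-convex}, which can also be extracted from Crisp--Wiest~\cite[Theorem~1(2)]{Crisp-Wiest} --- namely: \emph{if $L'$ is a full subcomplex of a flag all-right spherical complex $L$, then $C(L')$ is convex in $C(L)$; equivalently, any two points of $L'$ at $L$-distance $<\pi$ are joined by an $L$-geodesic lying in $L'$, while a pair at distance $\ge\pi$ is joined through the cone point.}

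I would prove this key lemma by induction on $\dim L$. The base cases $\dim L\le 1$ are elementary: a geodesic of length $<\pi$ in a triangle-free metric graph with all edges of length $\pi/2$ traverses at most one full edge in its interior, the endpoints of that edge lie in $L'$ (being vertices of edges whose interiors are already in $L'$, or being the given endpoints), and fullness then places the full edge, hence the whole geodesic, in $L'$. For the inductive step one runs the same cone reduction one dimension down: near an interior point of a cell $\delta$ of $L$ the space $L$ looks like $\RR^{\dim\delta}\times C\bigl(\Lk(\delta,L)\bigr)$ with $\Lk(\delta,L)$ a lower-dimensional flag complex and $\Lk(\delta,L')$ a full subcomplex, so the inductive hypothesis yields local convexity of $L'$ in $L$; the genuinely delicate point --- the one I expect to cost the most work --- is to upgrade local convexity to the stated $\pi$-convexity, controlling in particular the behaviour at vertices of $L$ and the mutual distances between connected components of $L'$.

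Granting the local step, the globalization step is formal. Equip $W$ with its intrinsic length metric $d_W$; local convexity makes $d_W$ agree with $d_X$ in a neighbourhood of each point, so $(W,d_W)$ is a complete, connected, locally CAT(0) length space and the inclusion $(W,d_W)\hookrightarrow(X,d_X)$ is a local isometry. By the Cartan--Hadamard theorem the universal cover $\widetilde W$ of $W$, with the lifted length metric, is CAT(0), so the composite $\widetilde W\to W\hookrightarrow X$ is a local isometry from a complete CAT(0) space into a CAT(0) space. Such a map carries geodesics to local geodesics, which in a CAT(0) space are geodesics, so it preserves distances and its image is convex; as that image is exactly the subset $W$, we conclude that $W$ is convex (and, in passing, simply connected).
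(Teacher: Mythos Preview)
Your global architecture---reduce to the $\pi$-convexity of a full subcomplex in a flag all-right spherical complex, then show $W\hookrightarrow X$ is a local isometric embedding and globalize via Cartan--Hadamard (i.e.\ \cite[Proposition~II.4.14]{BH})---is exactly the paper's, and your ``convex $\Rightarrow$ (CLC)'' argument via barycentres of overlapping facets is a correct, pleasantly direct alternative to the paper's route through Lemma~\ref{lem:pi-convex} (it uses that individual cubes are convex in $X$, which is standard but should be justified independently of the implication you are proving).

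The genuine gap is in your inductive proof of the key lemma. The induction on $\dim L$ does give local convexity of $L'$ in $L$ at every point, vertices included, since at a vertex the link drops dimension and the hypothesis applies. But the upgrade you flag as ``delicate'' is not merely laborious; it is the entire content. Local convexity of $C(L')$ in $C(L)$ at the cone point is literally \emph{equivalent} to $\pi$-convexity of $L'$ in $L$, so you cannot bootstrap through the cone, and the local-to-global theorems of Bux--Witzel and Ramos-Cuevas require the subset to be connected, which $L'$ need not be. Nothing in your setup rules out two components of $L'$ sitting at $L$-distance $<\pi$, or a single component $L'_0$ with $d_{L'_0}(a,b)\ge\pi$ while $d_L(a,b)<\pi$; in either case your argument produces no candidate path in $L'$ to compare with $[a,b]_L$. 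The paper (Lemma~\ref{lem:pi-convex}) closes this with a direct, non-inductive argument: take a minimal offending segment $[a,b]$ with $(a,b)\cap K=\emptyset$, choose a vertex $v\notin K$ of the simplex carrying the germ of $[a,b]$ at $a$, and develop the portion of $[a,b]$ lying in $\cst(v,L)$ into the closed hemisphere $S^2_+$ with $v\mapsto N$; the development is a local geodesic of length $<\pi$ with both endpoints on $\partial S^2_+$, hence lies in $\partial S^2_+$, contradicting that its interior points came from $\ost(v,L)$. This single hemisphere estimate is what replaces your missing local-to-global step.
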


Recall that a subcomplex $K$ of a simplicial complex $L$
is {\it full} if any simplex of $L$
whose vertices are in $K$ is in fact entirely contained in $K$.
The condition (CLC) is nothing other than 
the definition for $W$ to be
\lq\lq combinatorially locally convex'' in $X$, 
in the sense of Haglund-Wise~\cite[Definition 2.9]{Haglund-Wise}
(cf.\ Haglund~\cite[Definitions 2.8 and 2.9]{Haglund}).
(Their terminology does not contain the adjective combinatorial.)
In fact, they introduced the concept of a \lq\lq combinatorial local isometry'',
and define $W$ to be combinatorially locally convex in $X$
if the inclusion map $j:W\to X$ is a combinatorial local isometry.
As (implicitly) suggested in~\cite{Stack},
Theorem~\ref{thm:convex} is an immediate consequence
of~\cite[Lemma 2.11]{Haglund-Wise} concerning 
combinatorial local isometries from cube complexes
to finite dimensional non-positively curved cube complexes.

In~\cite[Proof of Lemma 2.11]{Haglund-Wise}
appealing to~\cite[Proposition II.4.14]{BH}
(which is deduced from the classical Cartan-Hadamard theorem),
it is implicitly assumed that
a combinatorial local isometry
is a local isometry in the usual sense
(Definition~\ref{def:convex}(2)).
On the other hand,
Haglund writes 
in~\cite[the paragraph preceding Theorem 2.13]{Haglund}
that in the finite dimensional case
it can be checked that combinatorial local isometries are 
precisely local isometries of the $\ell_2$ (Euclidean) metrics.
Moreover, Petrunin notes in~\cite{Stack} that
combinatorial local convexity implies local convexity 
and that this can be proved the same way 
as the flag condition (Gromov's link condition) for CAT(0) spaces.
Thus Theorem~\ref{thm:convex} is established 
by~\cite[Lemma 2.11]{Haglund-Wise},
though we could not find a reference
that includes a proof of 
the implicit assertion.\footnote{It turned out that
the assertion immediately follows from the result \cite[Theorem 1(2)]{Crisp-Wiest} due to Crisp and Wiest.
Moreover, Leary's article \cite{Leary} includes a direct proof of 
Theorem~\ref{thm:convex} and so that of the assertion.
See Late Additions (Section \ref{sec:late-addition})
for the details.}

The purpose of this note is to give a full proof of 
Theorem~\ref{thm:convex} by writing down a proof of 
the assertion (Theorem~\ref{prop:local-convex}).
Our proof totally depends on
Bridson-Haefliger~\cite{BH},
and it may be regarded as a relative version of 
the proof of
Gromov's link condition
included in the book
(see~\cite[Proofs of Theorems II.5.2 and II.5.20]{BH}).

The main bulk of this note was originally written as a part of~\cite{SS}.
After learning from~\cite{Stack} that
Theorem~\ref{thm:convex} is well-known among experts
(as we had expected) and
that it is essentially contained in 
Haglund-Wise~\cite[Lemma 2.11]{Haglund-Wise},
we decided to move that part of~\cite{SS}
into this separate note.
We hope this note is of some use to those who are not
so familiar with the relation between
the two concepts concerning local convexity.

We note that
Theorem~\ref{thm:convex} may be regarded
as a Euclidean metric version of the combinatorial result 
by Haglund~\cite[Theorem 2.13]{Haglund},
which shows that combinatorial convexity~\cite[Definition 2.9]{Haglund}
is a local combinatorial property.
However, Theorem~\ref{thm:convex} is weaker 
than~\cite[Theorem 2.13]{Haglund},
in the sense that the former assumes finite dimensionality 
whereas the latter does not.\footnote{Leary gives a proof 
in the infinite dimensional case, too.}

As is summarized in~\cite{RC},
local convexity implies (global) convexity
in various settings, including the following:
\begin{itemize}
\item[-]
closed connected subsets in a Euclidean space (Nakajima~\cite{Nakajima} and Tietze~\cite{Tietze}),
\item[-]
closed connected subsets (whose diameter is 
less than $\pi/\sqrt{\kappa}$ when $\kappa>0$) 
in a complete CAT($\kappa$) space
(Bux-Witzel~\cite[Theorems 1.6 and 1.10]{Bux-Witzel} 
and Ramos-Cuevas~\cite[Theorem 1.1]{RC}), and 
\item[-]
closed connected (by rectifiable arcs) subsets of
proper Busemann spaces (Papadopoulos~\cite[Proposition 8.3.3]{Pa}).
\end{itemize} 
The following well-known fact is the simplest non-trivial example of such results.
\begin{itemize}
\item[-]
A local geodesic in a CAT($\kappa$) space
(of length less than $\pi/\sqrt{\kappa}$ when $\kappa>0$) 
is a geodesic~\cite[Proposition I.1.4(2)]{BH}.
\end{itemize} 
This fact is repeatedly (though implicitly) used in this note.

\medskip
{\bf Acknowledgement.}
We thank Hirotaka Akiyoshi for his criticism and helpful discussion.
The second author is supported by JSPS KAKENHI Grant Number JP20K03614
and by Osaka Central Advanced Mathematical Institute 
(MEXT Joint Usage/Research Center on Mathematics and Theoretical Physics JPMXP0619217849).

\section{Basic definitions and outline of the proof of Theorem~\ref{thm:convex}}

We first recall basic facts 
concerning non-positively curved spaces following Bridson-Haefliger~\cite{BH}.

Let $X=(X,d)$ be a metric space.
In this paper, we mean by
a {\it geodesic} in $X$ an isometric embedding $g:J\to X$
where $J$ is a connected subset of $\RR$.
If $J$ is a closed interval,
we call $g$ a {\it geodesic segment}.
We do not distinguish between a geodesic and its image.
$X$ is a {\it geodesic space}
if every pair of points can be joined by a geodesic in $X$.
It is said to be {\it uniquely geodesic}
if for every pair of points there is a unique geodesic joining them.
For points $a$ and $b$ in a geodesic space $X$,
we denote by $[a,b]$ a geodesic segment joining $a$ and $b$.
The symbols $(a,b)$, $[a,b)$ and $(a,b]$
represent open or half-open geodesic segments, respectively.  
The distance $d(a,b)$ is equal to $\length([a,b])$,
the length of the geodesic segment $[a,b]$.
Thus the geodesic space $X$ is a {\it length space}
in the sense that the distance between two points
is the infimum over the lengths of rectifiable curves 
that join them~\cite[I.1.18 and I.3.1]{BH}.

A geodesic space $X$ is a {\it CAT(0) space}
if any geodesic triangle is thinner than a comparison triangle in 
the Euclidean plane $\EE^2$,
that is, the distance between any points on a geodesic triangle is
less than or equal to the corresponding points on a comparison triangle
~\cite[Definition~II.1.1]{BH}.
A CAT(0) space is uniquely geodesic~\cite[Proposition~II.1.4(1)]{BH}.
A geodesic space $X$ is said to be {\it non-positively curved}
if it is locally a CAT(0) space,
i.e., for every $x\in X$ there exists $r>0$
such that the open $r$-ball 
$B_X(x,r):=\{y\in X \ | \ d(x,y)<r\}$
in $X$ with center $x$,
endowed with the induced metric, 
is a CAT(0) space~\cite[Definition~II.1.2]{BH}.

A {\it cubed complex} is  
a metric space $X=(X,d)$ obtained from a disjoint union of unit cubes
$\hat X=\bigsqcup_{\lambda\in\Lambda} (I^{n_\lambda}\times\{\lambda\})$
by gluing their faces through isometries.
To be precise, it is an $M_{\kappa}$-polyhedral complex with $\kappa=0$
in the sense of~\cite[Definition I.7.37]{BH}
that is made up of Euclidean unit cubes, i.e.,
the set $\mathrm{Shapes}(X)$ in the definition 
consists of Euclidean unit cubes.
(See~\cite[Example (I.7.40)(4)]{BH}.)
The metric $d$ on $X$ is the length metric induced from the Euclidean metric
of the unit cubes.
See~\cite[I.7.38]{BH} for a precise definition.
Every finite dimensional cubed complex 
is a complete geodesic space~\cite[Theorem in p.97 or I.7.33]{BH},
where the {\it dimension} of the cubed complex is defined to be 
$\max\{n_{\lambda}\}$.
Note that the restriction of
the projection $p:\hat X \to X$ to
$I^{n_\lambda}\times\{\lambda\}$ is not necessarily injective.
Thus a cubed complex is not necessarily a {\it cubical complex} in the sense of \cite[Definition I.7.32]{BH}, 
i.e., a {\it cube complex} which is {\it simple} in the sense of 
\cite{Haglund, Haglund-Wise}.
However, the difference is not essential
for non-positively curved cubed complexes,
because the second cubical subdivision of
a non-positively curved cubed complex is a cubical complex
by \cite[Corollary C.11]{Leary}, and 
because 
the metric of the cubed complex and that of
its cubical subdivision (after rescaling) are identical
(cf.~\cite[Lemma I.7.48]{BH}).
(A {\it cube complex} in \cite[Appendix C]{Leary} is a {\it cubed complex} 
in this note, i.e., in the sense of \cite[Example I.7.40(4)]{BH},
as noted in 
\cite[the first sentence in Appendix C]{Leary}.)

Two non-trivial geodesics issuing from a point $x\in X$
are said to define the {\it same direction}
if the Alexandrov angle between them is zero.
This defines an equivalence relation on the set of 
non-trivial geodesics issuing from $x$,
and the Alexandrov angle induces a metric 
on the set of the equivalence classes.
The resulting metric space is called the
{\it space of directions} at $x$ and denoted 
$S_x(X)$~\cite[Definition II.3.18]{BH}.

Suppose $x$ is a vertex $v$ of the cubed complex $X$.
Then 
the space $S_v(X)$ is obtained by gluing the spaces
$\{S_{v_{\lambda}}(I^{n_{\lambda}}\times\{\lambda\})\}_{v_{\lambda}\in p^{-1}(v)}$.
Here $S_{v_{\lambda}}(I^{n_{\lambda}}\times\{\lambda\})$ is the space   
of directions in the cube $I^{n_{\lambda}}\times\{\lambda\}$ at the vertex $v_{\lambda}$;
so it is an {\it all-right spherical simplex},
a geodesic simplex in the unit sphere $S^{n_{\lambda}-1}$  
all of whose edges have length $\pi/2$.
Hence $S_v(X)$ has a structure of a finite dimensional {\it all-right spherical complex}, 
namely an $M_{\kappa}$-polyhedral complex with $\kappa=1$
in the sense of~\cite[Definition I.7.37]{BH}
that is made up of all-right spherical simplices, i.e.,
the set $\mathrm{Shapes}(X)$ in the definition 
consists of all-right spherical simplices.
This complex is called the {\it geometric link} of $v$ in $X$,
and is denoted by $\Lk(v,X)$~\cite[(I.7.38)]{BH}.
It should be noted that $\Lk(v,X)$ is not necessarily a simplicial complex:
it is a simplicial complex if $X$ is a cubical complex.
The geometric link $\Lk(v,X)$ is endowed with the length metric $d_{\Lk(v,X)}$
induced from the spherical metrics of the all-right spherical simplices.
Let $d_{\Lk(v,X)}^{\pi}$ be the metric defined by
\[
d_{\Lk(v,X)}^{\pi}(u_1,u_2):=\min\{d_{\Lk(v,X)}(u_1,u_2),\pi\}.
\]
Then the metric $d_{S_v(X)}$ on $S_v(X)=\Lk(v,X)$ 
is equal to the metric $d_{\Lk(v,X)}^{\pi}$
(see~\cite[the second sentence in p.191]{BH} 
or~\cite[Lemma 5.5]{SS}).

\begin{definition}
\label{def:convex}
{\rm
Let $X$ be a uniquely geodesic space and $W$ a subset of $X$.

(1) $W$ is {\it convex} in $X$
if, for any distinct points $a$ and $b$ in $W$,
the unique geodesic segment $[a,b]$ in $X$ is contained in $W$.

(2) $W$ is {\it locally convex} in $X$ if, for every $x\in W$,
there is an $\epsilon >0$ such that
$W\cap B_X(x,\epsilon)$ is convex in $X$,
where $B_X(x,\epsilon)$
is the open $\epsilon$-ball in $X$
with center $x$.

(3) Assume that
$X$ is a cubed complex 
and $W$ is a subcomplex of $X$.
Then $W$ is {\it combinatorially locally convex} in $X$
if it satisfies the the condition (CLC),
i.e.,
$\Lk(v,W)$ is a full subcomplex of $\Lk(v,X)$
for every vertex $v$ of $W$.
}
\end{definition}

In the next section, we prove the following theorem.

\begin{theorem}
\label{prop:local-convex}
Let $X$ be a finite dimensional
CAT(0) cubed complex and $W$ a subcomplex of $X$. 
Then $W$ is locally convex in $X$ if and only if it is
combinatorially locally convex in $X$.
\end{theorem}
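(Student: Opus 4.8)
The plan is to prove the two implications separately. The forward implication is elementary; the substance is in the converse, which I would reduce to a statement about all-right spherical complexes, proved in parallel with the proof of the link condition in \cite[\S II.5]{BH}.

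\emph{Local convexity implies \textup{(CLC)}.} Let $W$ be locally convex and $v$ a vertex of $W$. A simplex $\sigma$ of $\Lk(v,X)$ corresponds to a cube $C$ of $X$ containing $v$, the vertices of $\sigma$ corresponding to the edges of $C$ at $v$, and one has $\sigma\subseteq\Lk(v,W)$ precisely when $C\subseteq W$. So it suffices to show that if every edge of $C$ at $v$ lies in $W$, then $C\subseteq W$; I would do this by induction on $\dim C$. Writing $C=[0,1]^k$ with $v$ the origin, the facet $F=\{0\}\times[0,1]^{k-1}$ contains $k-1$ of the edges of $C$ at $v$, hence $F\subseteq W$ by induction, while the remaining edge $e$ of $C$ at $v$ also lies in $W$. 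Choosing $p$ in the interior of $F$ and $q$ in the interior of $e$, both at small distance from $v$, the straight segment $[p,q]$ inside $C$ is a local geodesic of $X$, hence a geodesic of $X$ because $X$ is CAT(0); and for $p,q$ close enough to $v$ it lies in $W\cap B_X(v,\epsilon)$ for the $\epsilon>0$ furnished by local convexity at $v$, so $[p,q]\subseteq W$. As $[p,q]$ meets $\interior C$ and $W$ is a subcomplex, $C\subseteq W$, as wanted.

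\emph{Reducing the converse to a spherical statement.} Assume (CLC), fix $x\in W$, and let $\sigma$ be the cube with $x\in\interior\sigma$; since $W$ is a subcomplex, $\sigma\subseteq W$. For small $\epsilon$, the ball $B_X(x,\epsilon)$ is isometric to a ball about the apex of the Euclidean cone on the space of directions $S_x(X)=S^{\dim\sigma-1}*\Lk(\sigma,X)$, and under this isometry $W\cap B_X(x,\epsilon)$ corresponds to the subcone on $S^{\dim\sigma-1}*\Lk(\sigma,W)$. Since a geodesic joining two non-apex points of a Euclidean cone either passes through the apex --- and then lies in the subcone as soon as its two endpoints do --- or else projects to a geodesic of $S_x(X)$ of length less than $\pi$, and since $B_X(x,\epsilon)$ is convex in the CAT(0) space $X$, the convexity of $W\cap B_X(x,\epsilon)$ in $X$ amounts exactly to $S^{\dim\sigma-1}*\Lk(\sigma,W)$ being \emph{$\pi$-convex} in $S^{\dim\sigma-1}*\Lk(\sigma,X)$, i.e.\ containing the (unique) geodesic between any two of its points at distance less than $\pi$. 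Because $X$ is CAT(0), hence non-positively curved, the link condition \cite[Theorem II.5.20]{BH} gives that every vertex link of $X$ is CAT(1), so that $S_x(X)$ and its join factor $\Lk(\sigma,X)$ are CAT(1); and a short bookkeeping argument propagates fullness of the links $\Lk(v,W)\subseteq\Lk(v,X)$ at the vertices $v$ of $W$ (which is (CLC)) to fullness of $\Lk(\sigma,W)$ in $\Lk(\sigma,X)$ for every cube $\sigma$ of $W$, fullness being moreover preserved under joining with a sphere. Thus the theorem is reduced to the statement: $(\ast)$ \emph{a full subcomplex of a finite-dimensional CAT(1) all-right spherical complex is $\pi$-convex in it.}

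\emph{Proof of $(\ast)$, and conclusion.} I would prove $(\ast)$ by induction on dimension; for $\dim L\le 0$ distinct points are at distance $\pi$, so the statement is vacuous. For the inductive step, let $L'$ be a full subcomplex of a CAT(1) all-right spherical complex $L$, let $\xi,\eta\in L'$ with $d_L(\xi,\eta)<\pi$, and let $c$ be the unique geodesic between them; suppose $c\not\subseteq L'$. As $L'$ is closed, there is a point $\zeta$ on $c$, lying in $L'$, near which $c$ leaves $L'$; treating the generic case in which $c$ remains in $L'$ on the other side of $\zeta$ (the case where $\zeta$ is an isolated point of $c\cap L'$ being handled similarly, from an endpoint), let $u^-\in S_\zeta(L')$ be the direction of $c$ at $\zeta$ into the part lying in $L'$ and $u^+\in S_\zeta(L)$ the direction into the part leaving $L'$; since $c$ is a geodesic, $d_{S_\zeta(L)}(u^-,u^+)=\pi$. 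In the local cone model at $\zeta$, $S_\zeta(L)$ is a CAT(1) all-right spherical complex of dimension less than $\dim L$ and $S_\zeta(L')$ is a full subcomplex of it, so the inductive hypothesis applies to the pair $(S_\zeta(L),S_\zeta(L'))$; in particular $L'$ is locally convex in $L$ near $\zeta$. The decisive step is to conclude that $u^+\in S_\zeta(L')$, contradicting the choice of $\zeta$ and closing the induction. I expect this to be the main difficulty: local convexity alone does not give it, because a geodesic may cross a locally convex set transversally, and one must instead exploit the combinatorics of the all-right metric. Writing $\tau$ for the carrier of $\zeta$ and using the join decompositions $S_\zeta(L)=S^{\dim\tau-1}*\Lk(\tau,L)$ and $S_\zeta(L')=S^{\dim\tau-1}*\Lk(\tau,L')$, one reduces to showing that the cell of $L$ along which $c$ leaves $\zeta$ has \emph{all} of its vertices in $L'$ --- whence it lies in $L'$ by fullness. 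Establishing this should require a structural lemma on all-right spherical complexes, in the spirit of those behind Gromov's Lemma in \cite[\S II.5]{BH}, describing how a local geodesic of length less than $\pi$ passes from cell to cell and forcing consecutive cells to span a common simplex. Granting $(\ast)$, the reduction above shows that $W\cap B_X(x,\epsilon)$ is convex for every $x\in W$, i.e.\ $W$ is locally convex, which together with the first part completes the proof.
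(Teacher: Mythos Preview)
Your forward implication and your reduction of the converse to the spherical statement $(\ast)$ are correct; the reduction via $S_x(X)=S^{\dim\sigma-1}*\Lk(\sigma,X)$ is in fact a little more direct than the paper's route through Lemma~\ref{lem:relative-versions}. The gap is in your proof of $(\ast)$. Your inductive hypothesis yields $\pi$-convexity of $S_\zeta(L')$ in $S_\zeta(L)$, but the two directions $u^\pm$ of the geodesic $c$ at $\zeta$ satisfy $d_{S_\zeta(L)}(u^-,u^+)\ge\pi$, so the hypothesis says nothing about them. Worse, your fallback claim --- that the cell $\tau'$ whose interior contains the outgoing germ of $c$ has all its vertices in $L'$ --- is not merely unproved but false: since that germ lies outside $L'$ we have $\tau'\not\subset L'$, and fullness then forces $\tau'$ to have a vertex $v\notin L'$. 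The suggested structural lemma (consecutive cells along a short local geodesic span a common simplex) likewise fails already on a $4$-cycle, where a geodesic crossing a vertex passes between two edges that span no $2$-simplex.

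The existence of such a vertex $v\notin L'$ is not an obstruction but the correct starting point. The paper's argument (Lemma~\ref{lem:pi-convex}) observes that $\ost(v,L)\cap L'=\emptyset$, so the geodesic, having entered $\ost(v,L)$ from a point $a\in\clk(v,L)$, must exit again at some first point $b'\in\clk(v,L)$. One then \emph{develops} the arc $[a,b']\subset\cst(v,L)$ into the closed hemisphere $S^2_+$, sending $v$ to the pole $N$ and each simplex of $\cst(v,L)$ isometrically onto a spherical triangle with apex $N$. The development is a local geodesic of length $<\pi$, hence a geodesic, with both endpoints on the equator $\partial S^2_+$ (since $a,b'\in\clk(v,L)$ lie at distance $\pi/2$ from $v$); such a geodesic is contained in $\partial S^2_+$, contradicting the fact that its interior breakpoints lie in $\interior S^2_+$. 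This development argument is exactly the engine of Gromov's flag criterion~\cite[Theorem II.5.18]{BH}; it does not reduce to an induction on dimension.
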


In the reminder of this section,
we give a proof of Theorem~\ref{thm:convex} by using the above theorem
and following~\cite[the proof of Lemma 2.11]{Haglund-Wise}.
The starting point of the proof is the following version of
the Cartan-Hadamard theorem.

\begin{proposition}
\label{prop:CH0}
{\rm \cite[Special case of Theorem II.4.1(2)]{BH}}
Let $X$ be a complete, connected, geodesic space.
If $X$ is non-positively curved, then
the universal covering $\tilde X$ (with the induced length metric)
is a CAT(0) space.
\end{proposition}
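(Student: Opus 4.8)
The plan is to reduce Theorem~\ref{thm:convex} to the equivalence between convexity and local convexity, and then to feed this into the Cartan--Hadamard theorem in the form of Proposition~\ref{prop:CH0}, exactly as in \cite[Proof of Lemma 2.11]{Haglund-Wise}. By Definition~\ref{def:convex}(3), the condition (CLC) is precisely the statement that $W$ is combinatorially locally convex in $X$, and by Theorem~\ref{prop:local-convex} this is equivalent to $W$ being locally convex in $X$. Hence it suffices to prove, for a connected subcomplex $W$ of the finite dimensional CAT(0) cubed complex $X$, that $W$ is convex in $X$ if and only if it is locally convex in $X$. The forward implication is immediate: in a CAT(0) space metric balls are convex, so if $W$ is convex then $W\cap B_X(x,\epsilon)$ is an intersection of two convex sets, hence convex, for every $x\in W$ and every $\epsilon>0$. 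The substance of the theorem is therefore the reverse implication, that local convexity of the \emph{connected} complex $W$ forces global convexity.

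For the reverse implication I would first record the metric properties of $W$. As a subcomplex of the finite dimensional cubed complex $X$, the space $W$ is itself a finite dimensional cubed complex, hence a complete, connected geodesic space for its own induced length metric $d_W$. Local convexity means that for each $x\in W$ there is an $\epsilon>0$ with $W\cap B_X(x,\epsilon)$ convex in $X$; since an $X$-geodesic between two nearby points of this convex piece stays inside it, the intrinsic metric $d_W$ agrees with the restriction of $d_X$ on a neighbourhood of $x$, so the inclusion $j\colon W\to X$ is a local isometry. Moreover a convex subset of a CAT(0) space, carrying the restricted metric (which coincides with its induced length metric), is again CAT(0); applying this to the pieces $W\cap B_X(x,\epsilon)$ shows that $W$ is non-positively curved.

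Now I would invoke the Cartan--Hadamard machinery. By Proposition~\ref{prop:CH0}, the universal cover $\tilde{W}$ of the complete, connected, non-positively curved space $W$, with its induced length metric, is a CAT(0) space. Composing the covering projection $\tilde{W}\to W$ (a local isometry) with $j$ gives a local isometry $\tilde{j}\colon\tilde{W}\to X$ between CAT(0) spaces, and by \cite[Proposition II.4.14]{BH} (the step appealed to in \cite[Proof of Lemma 2.11]{Haglund-Wise}, itself deduced from Cartan--Hadamard) the map $\tilde{j}$ is an isometric embedding whose image is a convex subset of $X$. In particular $\tilde{j}$ is injective; since it factors through the covering projection $\tilde{W}\to W$ followed by $j$, that projection must also be injective, so $W$ is simply connected and $W=\tilde{W}$. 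Thus $j$ itself is an isometric embedding of $W$ onto a convex subset of $X$; but the image of the inclusion $j$ is $W$ itself, so $W$ is convex in $X$, as required.

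I expect the main obstacle to be the correct invocation of \cite[Proposition II.4.14]{BH}: one must verify carefully that local convexity in $X$ upgrades to the inclusion being a local isometry in the strict metric sense required there, and then extract from the convex isometric embedding of $\tilde{W}$ the conclusion that $W$ itself, and not merely its universal cover, is simply connected and convex. The finite dimensionality of $X$ enters precisely to ensure that $W$ and $X$ are complete geodesic spaces, which is what makes Proposition~\ref{prop:CH0} and \cite[Proposition II.4.14]{BH} applicable.
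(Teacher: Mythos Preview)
Your proposal does not address Proposition~\ref{prop:CH0} at all. That proposition is the Cartan--Hadamard theorem, quoted as a black box from \cite[Theorem II.4.1(2)]{BH}; the paper gives no proof of it, and none is expected. What you have actually written is a proof of Theorem~\ref{thm:convex}, in which Proposition~\ref{prop:CH0} appears only as an \emph{ingredient}. So there is a basic mismatch between the statement you were asked to treat and the statement your argument treats.

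If your intent was Theorem~\ref{thm:convex}, then your argument is essentially the paper's own: reduce (CLC) to local convexity via Theorem~\ref{prop:local-convex}, upgrade local convexity of $W$ to the inclusion $j\colon W\to X$ being a local isometric embedding (the content of Claim~\ref{claim:local-convexity}), observe that $W$ is non-positively curved, and then feed this into \cite[Proposition II.4.14]{BH} together with Cartan--Hadamard to conclude that $j$ is a global isometric embedding with convex image. The only cosmetic difference is how simple connectedness of $W$ is extracted: the paper uses the injectivity of $j_*\colon\pi_1(W)\to\pi_1(X)=1$ supplied by Proposition~\ref{prop:CH}(1), whereas you argue that the injective lift $\tilde j\colon\tilde W\to X$ factors through the covering map, forcing the latter to be injective. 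One small overstatement: Proposition~\ref{prop:CH} (i.e.\ \cite[Proposition II.4.14]{BH}) only asserts that the lift is an isometric embedding, not directly that the image is convex; convexity of the image is the separate, easy observation that an isometric embedding of a geodesic space into a uniquely geodesic space has convex image, which the paper spells out explicitly in its final sentence.
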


See~\cite[Definition I.3.24]{BH}  
for the definition of the {\it induced length metric} on $\tilde X$.
The Cartan-Hadamard theorem implies the following result
~\cite[Proposition II.4.14]{BH},
which plays an essential role
in~\cite[Proof of Lemma 2.11]{Haglund-Wise}
and so in the proof of Theorem~\ref{thm:convex}.

\begin{proposition}
\label{prop:CH}
{\rm \cite[Proposition II.4.14]{BH}}
Let $X$ and $Y$ be a complete, connected metric space.
Suppose that $X$ is non-positively curved and that $Y$ is locally a length space.
If there is a map $f:Y\to X$ that is locally an isometric embedding,
then $Y$ is non-positively curved and:
\begin{enumerate}[\rm (1)]
\item
For every $y_0\in Y$,
the homomorphism $f_*:\pi_1(Y,y_0)\to \pi_1(X, f(y_0))$
induced by $f$ is injective.
\item
Consider the universal coverings $\tilde X$ and $\tilde Y$ with 
their induced length metrics.
Every continuous lifting $\tilde f: \tilde Y \to \tilde X$ of $f$
is an isometric embedding.
\end{enumerate}
\end{proposition}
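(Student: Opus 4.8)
The plan is to deduce everything from the Cartan--Hadamard theorem (Proposition~\ref{prop:CH0}) together with the elementary fact, recalled in the introduction, that in a CAT(0) space every local geodesic is actually a geodesic~\cite[Proposition I.1.4(2)]{BH}. The strategy is to lift $f$ to a map $\tilde f$ of universal covers, to argue that $\tilde f$ is again a local isometric embedding but now between two \emph{CAT(0)} spaces, and then to promote this local statement to a global one: a local isometric embedding out of a geodesic CAT(0) space must be a genuine isometric embedding, because it carries the unique geodesic joining two points to a local geodesic, which the CAT(0) target straightens into an honest geodesic of the same length. Assertions (1) and (2) then both fall out of this single global statement.

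First I would check that $Y$ is non-positively curved. Fix $y\in Y$ and choose $\epsilon>0$ so small that $f$ restricts to an isometric embedding on $B_Y(y,\epsilon)$ whose image lies in a convex CAT(0) ball of $X$. A small geodesic triangle in $B_Y(y,\epsilon)$ is carried by $f$ to a geodesic triangle in $X$ with the same side lengths, since an isometric embedding preserves geodesics; the two triangles therefore have the \emph{same} comparison triangle in $\EE^2$. As the image triangle is thin, so is the original, and hence $B_Y(y,\epsilon)$ is CAT(0). Thus $Y$ is non-positively curved. Being moreover complete, connected and locally a length space, $Y$ is a complete geodesic space, so Proposition~\ref{prop:CH0} applies to both $X$ and $Y$: the universal covers $\tilde X$ and $\tilde Y$, with their induced length metrics, are CAT(0). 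Since $\tilde Y$ is simply connected, $f\circ p_Y$ lifts through the covering $p_X:\tilde X\to X$ to a continuous map $\tilde f:\tilde Y\to\tilde X$; and because the covering projections are local isometries while $f$ is a local isometric embedding, $\tilde f$ is again a local isometric embedding.

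Now comes the core step. Given $\tilde y_1,\tilde y_2\in\tilde Y$, let $\gamma$ be the unique geodesic segment joining them in the CAT(0) space $\tilde Y$. As $\tilde f$ is a local isometric embedding, $\tilde f\circ\gamma$ is a local geodesic in $\tilde X$; since $\tilde X$ is CAT(0), $\tilde f\circ\gamma$ is a genuine geodesic, whence
\[
d_{\tilde X}\bigl(\tilde f(\tilde y_1),\tilde f(\tilde y_2)\bigr)
=\length(\tilde f\circ\gamma)=\length(\gamma)=d_{\tilde Y}(\tilde y_1,\tilde y_2).
\]
Therefore $\tilde f$ is an isometric embedding, which is assertion~(2); in particular $\tilde f$ is injective. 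For assertion~(1) I would use that $\tilde f$ is equivariant for the deck actions, i.e.\ $\tilde f(g\cdot\tilde y)=f_*(g)\cdot\tilde f(\tilde y)$ for all $g\in\pi_1(Y,y_0)$, which follows from uniqueness of lifts after matching base points. If $g\in\ker f_*$ then $\tilde f(g\cdot\tilde y)=\tilde f(\tilde y)$ for every $\tilde y$, and injectivity of $\tilde f$ forces $g$ to act trivially; since the deck action is free, $g=1$. Hence $f_*$ is injective.

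The main obstacle is the local-to-global passage in the core step: one must know both that $\tilde f$ transports geodesics to \emph{local} geodesics—a local consequence of $\tilde f$ being a local isometric embedding, but one that has to be set up carefully where the local charts overlap—and that local geodesics in the CAT(0) target are geodesics. The latter is exactly where the CAT(0) hypothesis on $X$, propagated to $\tilde X$ by Cartan--Hadamard, is indispensable, and it explains why the simple connectivity of $\tilde Y$ (which makes $\gamma$ \emph{the} geodesic rather than one of several) cannot be dispensed with. By comparison, verifying that $\tilde f$ is indeed a local isometric embedding, and that $Y$ and $X$ are genuinely geodesic spaces so that Proposition~\ref{prop:CH0} is applicable, are the routine but necessary technical points.
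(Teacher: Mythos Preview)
The paper does not supply its own proof of this proposition; it is quoted from \cite[Proposition II.4.14]{BH} and used as a black box, so there is no in-paper argument to compare your proposal against.

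That said, your proposal has a genuine gap at the outset, and the paper itself flags the issue in Remark~\ref{rem:simple-proof}(1): the hypothesis on $Y$ is only that it is \emph{locally a length space}, not a length space, let alone a geodesic space. Your sentence ``Being moreover complete, connected and locally a length space, $Y$ is a complete geodesic space'' is unjustified---completeness together with being locally a length space does not by itself produce global geodesics (Hopf--Rinow type statements need local compactness or some substitute). The same difficulty already undermines your first paragraph: you speak of ``a small geodesic triangle in $B_Y(y,\epsilon)$'' as though such triangles exist, but a priori $B_Y(y,\epsilon)$ need not be geodesic. The local isometric embedding identifies it with a subset of a CAT(0) ball in $X$, yet that subset is not known to be convex, so $X$-geodesics between image points may leave the image and cannot be pulled back. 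This is precisely what Bridson--Haefliger handle with some care, and why the paper remarks that their argument that $Y$ is non-positively curved ``is rather involved, because it only assumes that the complete metric space $Y$ is locally a length space''.

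Once that step is granted---so that $\tilde Y$ really is a CAT(0) geodesic space---your core local-to-global argument is fine and is indeed the standard one: $\tilde f$ carries the unique geodesic in $\tilde Y$ to a local geodesic in the CAT(0) space $\tilde X$, which must therefore be a geodesic of the same length; injectivity of $f_*$ then follows from equivariance and injectivity of $\tilde f$.
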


In the above proposition, 
$f:Y\to X$ being {\it locally an isometric embedding}
means that, for every $y\in Y$,
there is an $\epsilon>0$ such that the restriction of $f$
to the open $\epsilon$-ball $B_Y(y,\epsilon)$ in $Y$
is an isometry onto 
its image in $X$~\cite[the sentence preceding Proposition II.4.14]{BH}.

\medskip

We now give a proof of Theorem~\ref{thm:convex}
following~\cite[Proof of Lemma 2.11]{Haglund-Wise}
and assuming Theorem~\ref{prop:local-convex}.

\begin{proof}[Proof of Theorem~\ref{thm:convex}]
Let $X$ be a finite dimensional
CAT(0) cubed complex and $W$ a connected subcomplex of $X$. 
Suppose $W$ is combinatorially locally convex.
Then
$W$ is locally convex by Theorem~\ref{prop:local-convex}.

\begin{claim}
\label{claim:local-convexity}
The inclusion map
$i:W\to X$, regarded as a map between cubed complexes,
is locally an isometric embedding,
namely, 
for every $x\in W$,
there is an $\epsilon>0$ such that the restriction of $j$
to the open $\epsilon$-ball $B_W(x,\epsilon)$ in $W$
(with respect to the metric $d_W$ 
of the cubed complex $W$)
is an isometry onto its image in the cubed complex $X$.
\end{claim}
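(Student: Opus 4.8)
The plan is to leverage the fact that, for a point $x$ in a cubed complex, there is a canonical local model for a neighbourhood of $x$: the open $\epsilon$-ball $B_X(x,\epsilon)$ (for $\epsilon$ small) is isometric to the open $\epsilon$-ball around the cone point in the Euclidean cone over the geometric link $\Lk(x,X)$ — or, if $x$ is not a vertex, over the link of the smallest cube containing $x$, which splits off a Euclidean factor. More precisely, by \cite[Theorem I.7.16 and the surrounding discussion]{BH} (the analysis of $M_\kappa$-polyhedral complexes near a point), there is $\epsilon_X>0$ such that $B_X(x,\epsilon_X)$ is isometric to the ball $B(\bar 0,\epsilon_X)$ in the Euclidean cone $C_0\bigl(S_x(X)\bigr)$, and similarly $B_W(x,\epsilon_W)$ is isometric to $B(\bar 0,\epsilon_W)$ in $C_0\bigl(S_x(W)\bigr)$; choose $\epsilon=\min\{\epsilon_X,\epsilon_W\}$. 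Under these identifications the inclusion $j:W\hookrightarrow X$ restricted to $B_W(x,\epsilon)$ is exactly the cone on the inclusion of the subcomplex $S_x(W)\hookrightarrow S_x(X)$ of all-right spherical complexes (this is immediate from the definition of the link, since a direction at $x$ in $W$ is in particular a direction at $x$ in $X$, and $\Lk(x,W)$ sits inside $\Lk(x,X)$ as a subcomplex built from the same all-right simplices).

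First I would record the elementary fact that if $A\subseteq B$ is an isometrically embedded subspace of a metric space and $C_0A\to C_0B$ is the induced map on Euclidean cones, then this cone map is an isometric embedding \emph{onto its image} provided the cone metric on $C_0A$ agrees with the subspace metric it inherits from $C_0B$; and the latter holds precisely when $A$ is such that distances in $B$ between points of $A$, measured along paths in $B$, are realised (or approximated) by paths staying in $A$. For links of cube complexes the relevant notion is exactly $\pi$-convexity: the subcomplex $\Lk(x,W)$ must have the property that $d_{S_x(W)}=d_{S_x(X)}|_{\Lk(x,W)}$. Thus the claim reduces to: \emph{the inclusion $S_x(W)\hookrightarrow S_x(X)$ is an isometric embedding for every $x\in W$.}

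For the reduction from local convexity to this link statement, I would use Theorem~\ref{prop:local-convex}: combinatorial local convexity (which we are assuming) gives that $\Lk(v,W)$ is a full subcomplex of $\Lk(v,X)$ at every vertex $v$, and more generally at any point $x$ the link $\Lk(x,W)$ is full in $\Lk(x,X)$. Now in an all-right spherical complex $L$ that is CAT(1) — and $\Lk(v,X)$ is CAT(1) by Gromov's link condition, since $X$ is CAT(0) — a full subcomplex is $\pi$-convex: a geodesic of length $<\pi$ between two points of a full subcomplex cannot leave it, because the combinatorial structure of all-right simplices forces any such geodesic to pass only through simplices all of whose vertices are "between" the endpoints, hence already in the full subcomplex. (This is the spherical analogue of the statement that a full subcomplex of a CAT(0) cube complex is convex, and is exactly the ingredient isolated in \cite[Theorem 1(2)]{Crisp-Wiest}; alternatively it follows by the same projection/normal-cube-path argument used there.) Hence $d_{S_x(W)}^{\le\pi}=d_{S_x(X)}^{\le\pi}|_{S_x(W)}$, and since both cone metrics only see distances $\le\pi$ in the link (distances $\ge\pi$ in the link all give the same cone distance), the cone map $B_W(x,\epsilon)\to B_X(x,\epsilon)$ is an isometric embedding.

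The main obstacle is the $\pi$-convexity step: verifying that a full subcomplex of a CAT(1) all-right spherical complex is $\pi$-geodesically convex. Unlike the CAT(0) cube complex case, one must be careful that geodesics of length exactly approaching $\pi$ behave well, and that "full subcomplex" — a purely combinatorial condition — really does control all such short geodesics; the cleanest route is to cite Crisp-Wiest~\cite[Theorem 1(2)]{Crisp-Wiest} for the link $\Lk(v,X)$ at vertices and reduce the general point $x$ to the vertex case via the splitting $\Lk(x,X)\cong \Lk(v,X)*\text{(spherical factor)}$ when $x$ lies in the interior of a positive-dimensional cube, the join factor being irrelevant to fullness. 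I would present the cone-model identification and the reduction carefully, and then invoke the cited $\pi$-convexity as the one external input, noting (as the paper's preintroduction already flags) that a self-contained proof of that input is supplied later.
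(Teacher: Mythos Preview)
Your approach is correct but takes a genuinely different route from the paper's proof. The paper's argument is much shorter: immediately before the claim, the paper has already invoked Theorem~\ref{prop:local-convex} to conclude that $W$ is \emph{locally convex} in $X$. Given that, the proof of the claim is a direct length-metric argument: pick $\epsilon>0$ so that $W\cap B_X(x,\epsilon)$ is convex in $X$; then for $a,b\in W\cap B_X(x,\epsilon)$ the $X$-geodesic $[a,b]$ lies in $W$, hence (by the definition of $d_W$ as a length metric) $d_W(a,b)=d_X(a,b)$; finally one checks $B_W(x,\epsilon)=W\cap B_X(x,\epsilon)$ and concludes. No cone model, no $\pi$-convexity, no link analysis appears in the proof of the claim itself---that machinery is confined to the proof of Theorem~\ref{prop:local-convex} in Section~\ref{sec:Proof-local-convex}.

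What you do instead is essentially to re-derive the relevant content of Theorem~\ref{prop:local-convex} (specifically Claim~\ref{claim:convex-cone} and Lemma~\ref{lem:pi-convex}) inline: you identify small balls with balls in the tangent cones, reduce the claim to showing that $S_x(W)\hookrightarrow S_x(X)$ is a $d^{\pi}$-isometric embedding, and then appeal to $\pi$-convexity of full subcomplexes in CAT(1) all-right spherical complexes (citing Crisp--Wiest). This is valid and self-contained, but in the paper's architecture it duplicates the work already packaged into Theorem~\ref{prop:local-convex}; once that theorem is in hand, the passage from ``$W$ is locally convex in $X$'' to ``$i$ is locally an isometric embedding'' is the elementary length-space observation above, and that is all the claim is meant to record. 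Your route would be the natural one if the goal were to prove the claim directly from (CLC) without passing through the intermediate statement of Theorem~\ref{prop:local-convex}.
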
 

\begin{proof}
Let $\epsilon>0$ be such that
$W\cap B_X(x,\epsilon)$ is convex in $X$.
Then for any $a, b\in W\cap B_X(x,\epsilon)$,
the geodesic $[a,b]$ in $X$ is contained in $W\cap B_X(x,\epsilon)$.
By the definitions of $d_X$ and $d_W$
as length metrics induced from the Euclidean metrics of the unit cubes,
we see that 
$[a,b]$ is also a geodesic in $W$ and $d_X(a,b)=d_W(a,b)$.
Hence the restriction of $i:W \to X$ to
the subspace $W\cap B_X(x,\epsilon) \subset W$
is an isometry onto its image $W\cap B_X(x,\epsilon) \subset X$.
The above observation also implies that 
$W\cap B_X(x,\epsilon)\subset B_W(x, \epsilon)$.
Since 
$B_W(x, \epsilon)\subset W\cap B_X(x,\epsilon)$ obviously holds,
we have $W\cap B_X(x,\epsilon)=B_W(x, \epsilon)$.
Hence, the restriction of $i:W \to X$ to the subspace 
$B_W(x, \epsilon) \subset W$ is an isometry onto its image in $X$.
\end{proof}

Since both $X$ and $W$ are complete~\cite[Theorem in p.97 or I.7.33]{BH}
and since $(W,d_W)$ is a length metric space,
Claim~\ref{claim:local-convexity} enables us to
apply Proposition~\ref{prop:CH} (\cite[Proposition II.4.14]{BH})
to $i:W\to X$,
and so the following hold.
\begin{enumerate}
\item[(0)]
$W$ is non-positively curved.
\item
$i_*:\pi_1(W)\to \pi_1(X)$ is injective.
\item
Consider the universal coverings $\tilde X$ and $\tilde W$
with their induced length metrics.
Every continuous lifting $\tilde i:\tilde W\to \tilde X$ of $i$
is an isometric embedding. 
\end{enumerate}
Since $X$ is a CAT(0) space, $\pi_1(X)=1$ and so $\pi_1(W)=1$ by 
the conclusion (1).
Thus $W=\tilde W$ and it is a CAT(0) space
by the conclusion (0) and the Cartan-Hadamard theorem (Proposition~\ref{prop:CH0}).
Hence, by the conclusion (2), 
$i:W \to X$ is an isometric embedding
of the cubed complex $W=\tilde W$ into the cubed complex $X=\tilde X$.
Thus, for any $a,b\in W$, the unique geodesic $[a,b]$ in 
the CAT(0) space $W$
is also a geodesic in $X$.
This means that $W=i(W)$ is convex in $X$,
completing the proof of the if part.

The only if part immediately follows from
the only if part of Theorem~\ref{prop:local-convex}.
\end{proof}

\begin{remark}
\label{rem:simple-proof}
{\rm
(1) In~\cite[Proof of Proposition II.4.14]{BH},
the proof of the assertion that $Y$ 
is non-positively curved
is rather involved, because it only assumes that 
the complete metric space $Y$ 
is locally a length space.
However, in our setting $Y=W$ is a connected subcomplex of 
the CAT(0) cubed complex which is combinatorially locally convex.
So, the assertion in our setting is an immediate consequence of 
Gromov's link condition~\cite[Theorem II.5.20]{BH}
(cf.\ Lemma~\ref{lem:W-CAT(0)}(2)).

(2) If we appeal to the relatively new results by
Bux-Witzel~\cite[Theorems 1.6 and 1.10]{Bux-Witzel} 
and Ramos-Cuevas~\cite[Theorem 1.1]{RC},
which in particular imply that
a closed connected subset of a complete CAT(0) space is convex
if and only if it is locally convex,
then Theorem~\ref{thm:convex} immediately follows from
Claim~\ref{claim:local-convexity}.
}
\end{remark}

\section{Proof of Theorem~\ref{prop:local-convex}}
\label{sec:Proof-local-convex}

We begin by recalling basic properties of CAT(1) spaces.
A metric space $L=(L,d)$ is a {\it CAT(1) space} if it is a geodesic space
all of whose geodesic triangles of perimeter less than $2\pi$ are not thicker than 
its comparison triangle
in the $2$-sphere $S^2$~\cite[Definition~II.1.1]{BH}.

\begin{proposition}
\label{thm:CAT1}
{\rm (1) (\cite[Theorem II.5.4]{BH})}
Any CAT(1) space is uniquely $\pi$-geodesic,
namely, for any points $a$ and $b$ of the space with $d(a,b)<\pi$,
there is a unique geodesic $[a,b]$ joining $a$ to $b$.

{\rm (2) (\cite[Theorem II.5.18]{BH})}
A finite dimensional all-right angled spherical complex 
is CAT(1) if and only if it is a flag complex.
\end{proposition}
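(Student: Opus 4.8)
The plan is to treat the two parts separately, since part~(1) is a direct consequence of the comparison inequality while part~(2) is Gromov's link condition specialized to all-right spherical complexes. For part~(1), existence of a geodesic joining points $a,b$ with $d(a,b)<\pi$ is already built into the definition of a CAT(1) space (it is a geodesic space), so only uniqueness requires argument. Suppose $\gamma_1,\gamma_2:[0,\ell]\to L$ are two geodesics from $a$ to $b$, where $\ell=d(a,b)<\pi$. Fix $t\in(0,\ell)$ and set $m=\gamma_2(t)$. I would form the geodesic triangle with vertices $a,m,b$, taking $[a,m]=\gamma_2|_{[0,t]}$, $[m,b]=\gamma_2|_{[t,\ell]}$ and $[a,b]=\gamma_1$; its perimeter is $2\ell<2\pi$, so the CAT(1) inequality applies. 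Since the side lengths satisfy $d(a,m)+d(m,b)=t+(\ell-t)=\ell=d(a,b)$, the comparison triangle $\bar\Delta\subset S^2$ is degenerate: $\bar m$ lies on the comparison segment $[\bar a,\bar b]$ (unique, as $\ell<\pi$) at distance $t$ from $\bar a$, which is exactly the comparison point of $\gamma_1(t)$. The CAT(1) inequality then yields $d(\gamma_1(t),m)\le d_{S^2}(\bar m,\bar m)=0$, so $\gamma_1(t)=\gamma_2(t)$; as $t$ is arbitrary, $\gamma_1=\gamma_2$.

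For part~(2), I would prove both implications by induction on $\dim L$, using the link characterization of CAT(1) for $M_1$-polyhedral complexes with finitely many shapes (\cite[Theorem II.5.2]{BH}): such a complex is CAT(1) if and only if the geometric link of every vertex is CAT(1) and $L$ contains no isometrically embedded closed geodesic of length less than $2\pi$. The decisive simplification is combinatorial: the link of a vertex $v$ in an all-right spherical complex is again an all-right spherical complex of strictly smaller dimension, and $\Lk(v,L)$ is a flag complex precisely when $L$ is flag near $v$ (the link of a vertex in a flag complex is flag).

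Assuming $L$ is flag, every vertex link is a flag all-right spherical complex of lower dimension, hence CAT(1) by the inductive hypothesis, so $L$ is locally CAT(1). It then remains to exclude closed geodesics of length less than $2\pi$. Here I would exploit the rigidity of local geodesics in all-right spherical complexes: away from vertices such a geodesic runs through simplices as great-circle arcs, and at each vertex it enters and leaves along directions whose distance in the link is at least $\pi$. A short closed geodesic would, after analysis of this turning condition, correspond to a cycle of vertices pairwise joined by edges yet bounding no simplex, which flagness forbids. Making precise this correspondence between short closed geodesics and empty simplices is, I expect, the main obstacle.

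Conversely, if $L$ is not flag, I would select a minimal vertex set $\{v_0,\dots,v_k\}$ that pairwise spans edges but spans no $k$-simplex. In the base case $k=2$, the edges $v_0v_1,v_1v_2,v_2v_0$ each have length $\pi/2$, forming a closed path of length $3\pi/2<2\pi$; since $\{v_0,v_1,v_2\}$ bounds no $2$-simplex, at each vertex the two incident edge-directions are non-adjacent in the link, hence at distance at least $\pi$, so the path is a local geodesic. This closed local geodesic of length less than $2\pi$ contradicts unique $\pi$-geodesicity from part~(1), so $L$ is not CAT(1). For general $k$ I would reduce to this base case by passing to the iterated link on the first two vertices, where minimality again forces an empty-triangle configuration.
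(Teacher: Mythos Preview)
The paper does not prove this proposition; it is stated with citations to \cite[Theorems II.5.4 and II.5.18]{BH} and used as a black box throughout. There is therefore no proof in the paper to compare your proposal against.

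That said, a brief assessment of your outline. Part~(1) is correct and standard. For part~(2), the inductive skeleton via the link criterion \cite[Theorem~II.5.2]{BH} is the right framework, but the heart of the ``flag $\Rightarrow$ CAT(1)'' direction---ruling out closed local geodesics of length $<2\pi$---is not carried out, and your heuristic (``a short closed geodesic would \ldots\ correspond to a cycle of vertices pairwise joined by edges yet bounding no simplex'') is not the mechanism that works. The actual argument in \cite{BH} is the \emph{development} technique: one shows that any local geodesic arc lying in the closed star of a vertex $v$, with endpoints on $\clk(v,L)$ and meeting $\ost(v,L)$, has length $\ge\pi$, by developing it isometrically into the hemisphere $S^2_+$. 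This is precisely the device the present paper reproduces in the proof of Lemma~\ref{lem:pi-convex}. For the converse direction, your claim that two non-adjacent edge-directions in $\Lk(v_i,L)$ lie at distance $\ge\pi$ is true but not immediate; it needs the observation that in an all-right spherical simplicial complex the open star of a vertex coincides with the open $\pi/2$-ball about it, so non-adjacent vertices have disjoint open stars and hence lie at distance $\ge\pi$. With that lemma supplied, your $k=2$ argument and the iterated-link reduction are sound.
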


Recall that a {\it flag complex} is a {\it simplicial} complex
in which every finite set of vertices that is pairwise joined by an edge
spans a simplex.

\begin{definition}
\label{def:cone}
{\rm
(\cite[Definition I.5.6]{BH})
For a metric space $Y=(Y,d_Y)$,
the {\it $0$-cone (or the Euclidean cone) $C_0(Y)$ over $Y$} is the metric space 
defined as follows.
As a set $C_0(Y)$ is obtained from $[0,\infty)\times Y$ by 
collapsing $0\times Y$ into a point.
The equivalence class of $(t,y)$ is denoted by $ty$,
where the class of $(0,y)$ is denoted by $0$ and is called the {\it cone point}.
The distance $d(ty,t'y')$ 
between two points $ty$ and $t'y'$ in $C_0(Y)$
is defined by the identity
\[
d(ty,t'y')^2=t^2+{t'}^2-2tt'\cos(d_Y^{\pi}(y,y')),
\]
where $d_Y^{\pi}(y,y')=\min\{d_Y(y,y'),\pi\}$.

For a vertex $v$ in a cubed complex $X$,
we denote the $0$-cone $C_0(\Lk(v,X))$ 
by $T_v(X)$ 
and call it the {\it tangent cone} at $v$~\cite[Definition II.3.18]{BH}.
}
\end{definition}

We have the following fundamental relation between 
CAT(0) spaces and CAT(1) spaces,
where the second statement (Gromov's link condition)
is proved by using the first statement (Berestovskii's theorem).

\begin{proposition}
\label{thm:CAT01}
{\rm (1) (Berestovskii~\cite[Theorem II.3.14]{BH})} 
Let $Y=(Y,d_Y)$ be a metric space.
Then the $0$-cone $C_0(Y)$ over $Y$ is a CAT(0) space 
if and only if $Y$ is a CAT(1) space.

{\rm (2) (Gromov's link condition)~\cite[Theorem II.5.20]{BH}}
A finite dimensional cubed complex $X$ is non-positively curved
if and only if, 
for every vertex $v\in X$, the geometric link $\Lk(v,X)$ is a CAT(1) space.
\end{proposition}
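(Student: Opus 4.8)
The plan is to treat the two parts in sequence: part (1), Berestovskii's theorem, is the analytic core, and part (2), Gromov's link condition, is then extracted from it by passing to the tangent cone. So first I would prove (1) and then deduce (2) from it.

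For part (1) the key preliminary is to understand geodesics in the $0$-cone $C_0(Y)$ of Definition~\ref{def:cone}. I would first show that two points $ty$ and $t'y'$ with $d_Y(y,y')<\pi$ are joined by a unique geodesic lying in the \emph{sector} $C_0([y,y'])$ over a geodesic segment $[y,y']$ in $Y$; since this sector is isometric to a planar Euclidean sector of angle $d_Y(y,y')$, the cone distance is computed by plane Euclidean trigonometry and agrees with the defining formula $d(ty,t'y')^2=t^2+{t'}^2-2tt'\cos d_Y^{\pi}(y,y')$. When $d_Y(y,y')\ge\pi$ the geodesic instead runs through the cone point and has length $t+t'$. (Here one uses that a CAT(1) space is uniquely $\pi$-geodesic, Proposition~\ref{thm:CAT1}(1), so the relevant segments $[y,y']$ exist.) With geodesics understood, I would prove the two implications through a single cosine-law dictionary: a geodesic triangle in $C_0(Y)$ with no vertex at the apex and pairwise angular distances $<\pi$ projects to a geodesic triangle in $Y$, and the spherical law of cosines on $Y$ is exactly the Euclidean law of cosines on the coned triangle. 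Thus the CAT(1) comparison inequality for the projected triangle in $Y$ translates, through the planar-sector picture, into the CAT(0) thin-triangle inequality for the cone triangle, giving $Y$ CAT(1) $\Rightarrow$ $C_0(Y)$ CAT(0); running the same dictionary backwards, the Euclidean comparison for a coned triangle yields the spherical comparison in $S^2$, giving the converse. Degenerate configurations — a vertex at the apex, an angular distance equal to $\pi$, or perimeter exactly $2\pi$ — would be checked separately.

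For part (2), assume (1). The essential local fact is that each vertex $v$ has a radius $r>0$ for which $B_X(v,r)$ is isometric to the $r$-ball about the cone point of the tangent cone $T_v(X)=C_0(\Lk(v,X))$: near $v$ the complex $X$ is assembled from Euclidean cube-corners, each isometric to a cone over an all-right spherical simplex, and the face identifications match the cone structure of $T_v(X)$ over $\Lk(v,X)$. Because $T_v(X)$ carries the homothety $ty\mapsto\lambda ty$ scaling the metric by $\lambda$, it is CAT(0) on a ball about the apex if and only if it is CAT(0) globally. Chaining these, $X$ is CAT(0) near $v$ iff $T_v(X)$ is CAT(0) iff, by part (1), $\Lk(v,X)$ is CAT(1). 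To upgrade this from vertices to all points, I would note that a point $x$ in the interior of a cube has a neighborhood whose link is a spherical join of a round sphere with the link of the carrier $\sigma$ of $x$; since round spheres are CAT(1), a spherical join of CAT(1) spaces is CAT(1), and the link of a simplex in a CAT(1) all-right spherical complex is again CAT(1) (the flag condition of Proposition~\ref{thm:CAT1}(2) passes to links of simplices, and the link of $\sigma$ sits inside $\Lk(v,X)$ for any vertex $v$ of $\sigma$), the hypothesis that every vertex link is CAT(1) propagates to the link of every point. Finite dimensionality, equivalently finiteness of the set of shapes, furnishes the uniform local control needed to conclude that $X$ is non-positively curved precisely when every vertex link is CAT(1).

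The main obstacle is the trigonometric heart of part (1): establishing, with full attention to geodesics that pass through or near the cone point, that the Euclidean cosine law on $C_0(Y)$ is equivalent after coning to the spherical cosine law on $Y$, together with the careful treatment of the boundary cases $d_Y(y,y')=\pi$ and perimeter $2\pi$. A secondary technical point, in part (2), is the inheritance of the CAT(1)/flag condition by links of simplices, combined with the spherical-join argument that reduces every non-vertex point to the vertex case.
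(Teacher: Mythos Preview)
The paper does not prove this proposition: it is stated as background, with both parts attributed by explicit citation to Bridson--Haefliger (\cite[Theorem II.3.14]{BH} and \cite[Theorem II.5.20]{BH}), and the text moves directly on to Lemma~\ref{lem:W-CAT(0)} without supplying any argument. So there is no ``paper's own proof'' to compare your proposal against.

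That said, your sketch is a faithful outline of the proofs one finds in \cite{BH}. For part~(1) you correctly identify the two ingredients: the explicit description of geodesics in $C_0(Y)$ (through the cone point when $d_Y(y,y')\ge\pi$, inside a planar sector otherwise), and the cosine-law dictionary translating CAT(0) comparison for cone triangles into CAT(1) comparison for their radial projections. One point you glide over: in the direction $C_0(Y)$ CAT(0) $\Rightarrow$ $Y$ CAT(1), you must first establish that $Y$ is $\pi$-geodesic before you can speak of triangles in $Y$; this comes out of the geodesic structure of $C_0(Y)$, but deserves a sentence. For part~(2) your reduction to vertices via \cite[Theorem~I.7.39]{BH} and Berestovskii, followed by the spherical-join/link-of-simplex argument for interior points, is the standard route and matches the structure of \cite[II.5]{BH}; the paper itself invokes exactly this local isometry $B_X(v,1/2)\cong B_{T_v(X)}(0,1/2)$ later in Lemma~\ref{lem:relative-versions}(1).
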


The following lemma is a simple consequence of the above results.

\begin{lemma}
\label{lem:W-CAT(0)}
Let $X$ be a finite dimensional
CAT(0) cubed complex and $W$ a connected subcomplex of $X$. 
Then the following hold.

{\rm (1)} For a vertex $v$ of $W$, if 
$\Lk(v,W)$ is a full subcomplex of $\Lk(v,X)$
then the tangent cone $T_v(W)$ is a CAT(0) space.

{\rm (2)}
If $\Lk(v,W)$ is a full subcomplex of $\Lk(v,X)$
for every vertex $v$ of $W$,
then the cubed complex $W$ is non-positively curved.
\end{lemma}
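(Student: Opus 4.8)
\textbf{Proof plan for Lemma~\ref{lem:W-CAT(0)}.}
The plan is to derive both parts from the chain of equivalences assembled in this section: flag complex $\Leftrightarrow$ CAT(1) link $\Leftrightarrow$ CAT(0) cone, together with Gromov's link condition. The single nontrivial point is purely combinatorial: if $\Lk(v,W)$ is a \emph{full} subcomplex of $\Lk(v,X)$ and $\Lk(v,X)$ is a flag complex, then $\Lk(v,W)$ is itself a flag complex. First I would establish this. So suppose a finite set of vertices of $\Lk(v,W)$ is pairwise joined by edges of $\Lk(v,W)$. These are also vertices of $\Lk(v,X)$ pairwise joined by edges of $\Lk(v,X)$, so by the flag property of $\Lk(v,X)$ (which holds by Proposition~\ref{thm:CAT01}(2) applied to the CAT(0), hence non-positively curved, complex $X$, combined with Proposition~\ref{thm:CAT1}(2)) they span a simplex $\sigma$ of $\Lk(v,X)$. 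Since all vertices of $\sigma$ lie in the subcomplex $\Lk(v,W)$ and $\Lk(v,W)$ is full in $\Lk(v,X)$, the simplex $\sigma$ is contained in $\Lk(v,W)$. Hence $\Lk(v,W)$ is a flag complex. One should also note that fullness, being inherited here, forces $\Lk(v,W)$ to be a genuine simplicial complex, so the notion of flag complex applies to it; this is automatic because it is a full subcomplex of the (simplicial, by flagness) complex $\Lk(v,X)$.

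For part~(1), once $\Lk(v,W)$ is known to be a finite dimensional flag complex, Proposition~\ref{thm:CAT1}(2) gives that $\Lk(v,W)$ is a CAT(1) space. Then Berestovskii's theorem, Proposition~\ref{thm:CAT01}(1), shows that the $0$-cone $C_0(\Lk(v,W))$ is a CAT(0) space. By Definition~\ref{def:cone} this cone is precisely the tangent cone $T_v(W)$, so $T_v(W)$ is CAT(0), which is the assertion. Here I am using that the all-right spherical complex structure on $\Lk(v,W)$ — coming from the cubes of $W$ through $v$ — is the restriction of that on $\Lk(v,X)$, so that $\Lk(v,W)$ really is a finite dimensional all-right spherical complex and Proposition~\ref{thm:CAT1}(2) applies.

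For part~(2), I would simply apply part~(1) uniformly. Assuming (CLC) at every vertex $v$ of $W$, the previous paragraph shows that for every such $v$ the geometric link $\Lk(v,W)$ is a CAT(1) space. The cubed complex $W$ is finite dimensional (its dimension is at most that of $X$), so Gromov's link condition, Proposition~\ref{thm:CAT01}(2), applies directly: $W$ is non-positively curved. Note that vertices of the cubed complex $W$ are exactly vertices of $X$ lying in $W$, so the hypothesis of (CLC) covers precisely the links that Gromov's criterion requires.

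I expect the combinatorial inheritance of the flag property in the first paragraph to be the only real content; everything else is assembling Propositions~\ref{thm:CAT1} and~\ref{thm:CAT01}. A minor point worth stating carefully is that a full subcomplex of a simplicial complex is simplicial, so that ``flag complex'' is even meaningful for $\Lk(v,W)$ — without this one might worry that $\Lk(v,W)$ could fail to be simplicial, but fullness rules that out.
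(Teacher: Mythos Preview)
Your proof is correct and follows essentially the same route as the paper's: show $\Lk(v,X)$ is flag (via Gromov's link condition and Proposition~\ref{thm:CAT1}(2)), deduce that the full subcomplex $\Lk(v,W)$ is flag, hence CAT(1), and then apply Berestovskii for part~(1) and Gromov's link condition for part~(2). You spell out the combinatorial step ``full subcomplex of flag is flag'' and the simpliciality remark more explicitly than the paper does, but the argument is the same.
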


\begin{proof}
(1) Since $X$ is a CAT(0) cubed complex,
$\Lk(v,X)$ is a flag complex by Proposition~\ref{thm:CAT01}(2).
If $\Lk(v,W)$ is a full subcomplex of $\Lk(v,X)$,
then $\Lk(v,W)$ is also a flag complex.
So, the all-right spherical complex $\Lk(v,W)$ is CAT(1)
by Proposition~\ref{thm:CAT1}(2).
Hence, $T_v(W)$ is a CAT(0) space by Proposition~\ref{thm:CAT01}(1).

(2) is proved by a similar argument
by using Proposition~\ref{thm:CAT01}(2) instead of Proposition~\ref{thm:CAT01}(1) in the last step.
\end{proof}

Next, we prove the following key lemma for 
the proof of Theorem~\ref{prop:local-convex}.

\begin{lemma}
\label{lem:pi-convex}
Let $L=(L,d)$ be a finite dimensional all-right spherical complex 
that is a flag complex,
and let $K$ be a subcomplex of $L$.
Then the following conditions are equivalent.
\begin{enumerate}[\rm (1)]
\item
$K$ is $\pi$-convex in $L$,
namely, for any points $a$ and $b$ of $K$ with $d(a,b)<\pi$,
the unique geodesic segment $[a,b]$ in $L$ is contained in $K$.
\item
$K$ is a full subcomplex of $L$.
\end{enumerate}
\end{lemma}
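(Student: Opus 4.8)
The plan is to treat the two implications separately. Throughout I use that $L$, being a flag complex, is a $\mathrm{CAT}(1)$ space by Proposition~\ref{thm:CAT1}(2), so that any two points of $L$ at distance $<\pi$ are joined by a unique geodesic, and that a locally geodesic path in $L$ of length $<\pi$ is a geodesic. I also use two standard facts about the local structure of an all‑right spherical complex: for a vertex $w$ of $L$ the open star $\ost(w)$ coincides with the open ball $B_L(w,\pi/2)$; and the closed star $\overline{\ost}(w)\ (=\overline B_L(w,\pi/2))$, with its induced length metric, is isometric to the truncated spherical cone $\{(r,u):0\le r\le\pi/2,\ u\in\Lk(w,L)\}$ inside the spherical cone $C_1(\Lk(w,L))$, the radial coordinate of a point $x$ being $d_L(w,x)$.

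$(1)\Rightarrow(2)$: Assume $K$ is $\pi$‑convex. I prove, by induction on $\dim\sigma$, that every simplex $\sigma$ of $L$ all of whose vertices lie in $K$ is contained in $K$ — which is precisely fullness. The case $\dim\sigma=0$ is trivial. If $\dim\sigma\ge1$, fix a vertex $v_0$ of $\sigma$ and let $\tau$ be the opposite face; its vertices lie in $K$, so $\tau\subseteq K$ by induction. Realizing $\sigma$ in $S^{\dim\sigma}$ with its vertices an orthonormal basis, every $x\in\sigma\setminus\{v_0\}$ lies on the great‑circle arc of length $\pi/2$ from $v_0$ to the point $y\in\tau$ obtained by deleting the $v_0$‑coordinate of $x$ and renormalizing; this arc is a geodesic of $\sigma$ which, off its endpoints, stays in a single open face of $\sigma$, hence is locally geodesic in $L$, hence equals $[v_0,y]_L$ (as it has length $<\pi$). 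Since $v_0,y\in K$ and $d_L(v_0,y)=\pi/2<\pi$, $\pi$‑convexity gives $[v_0,y]_L\subseteq K$, so $x\in K$; thus $\sigma\subseteq K$.

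$(2)\Rightarrow(1)$: Assume $K$ is full, so $K$ is the full subcomplex of $L$ spanned by its vertex set $V(K)$, whence $K=\bigcap_{w\in V(L)\setminus V(K)}L_w$, where $L_w:=L\setminus\ost(w)$ (the full subcomplex spanned by $V(L)\setminus\{w\}$). An intersection of $\pi$‑convex subcomplexes is $\pi$‑convex (immediate from uniqueness of geodesics of length $<\pi$), so it suffices to show each $L_w$ is $\pi$‑convex; equivalently, that a geodesic $\gamma\colon[0,\ell]\to L$ with $\ell<\pi$ and $d_L(\gamma(0),w),d_L(\gamma(\ell),w)\ge\pi/2$ satisfies $d_L(\gamma(t),w)\ge\pi/2$ throughout. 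Suppose not, and let $(s_1,s_2)$ be a connected component of $\{t:d_L(\gamma(t),w)<\pi/2\}$; then $0<s_1<s_2<\ell$, $d_L(\gamma(s_i),w)=\pi/2$, and $d_L(\gamma(t),w)<\pi/2$ on $(s_1,s_2)$. The comparison triangle in $S^2$ for $(w,\gamma(s_1),\gamma(s_2))$ exists (its perimeter is $\pi+(s_2-s_1)<2\pi$), and since its apex is at distance $\pi/2$ from the two other vertices it is a pole of the great circle through them, so the comparison distance from the apex to every point of the opposite side equals $\pi/2$; the $\mathrm{CAT}(1)$ inequality then gives $d_L(\gamma(t),w)\le\pi/2$ on $[s_1,s_2]$. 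Hence $\gamma|_{[s_1,s_2]}\subseteq\overline{\ost}(w)$, and being length‑minimizing in $L$ it is a geodesic of $\overline{\ost}(w)$, i.e., of the truncated cone $C_1(\Lk(w,L))|_{r\le\pi/2}$, joining two points $(\pi/2,u_1),(\pi/2,u_2)$ at radius $\pi/2$, where $u_i$ is the direction of $[w,\gamma(s_i)]$. Now $\Lk(w,L)$ is flag, hence $\mathrm{CAT}(1)$, and $d_{\Lk(w,L)}(u_1,u_2)<\pi$ (otherwise the cone distance of the two points would be $\pi$, contradicting $d_L(\gamma(s_1),\gamma(s_2))=s_2-s_1<\pi$); so the geodesic lies in the lune $C_1([u_1,u_2]_{\Lk(w,L)})$, a convex region of $S^2$ in which $(\pi/2,u_1)$ and $(\pi/2,u_2)$ are two equatorial points at distance $<\pi$, whence it runs along the equator and stays at radius exactly $\pi/2$. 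This contradicts $d_L(\gamma(t),w)<\pi/2$ on $(s_1,s_2)$, so $L_w$ — and therefore $K$ — is $\pi$‑convex.

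The $\mathrm{CAT}(1)$ comparison geometry above is routine; the step I expect to be the real obstacle is justifying the two structural facts about all‑right spherical complexes invoked at the outset — that $\ost(w)=B_L(w,\pi/2)$, and that $\overline{\ost}(w)$ with its induced metric is the truncated spherical cone over $\Lk(w,L)$ — after which $(2)\Rightarrow(1)$ reduces to the explicit computation on $S^2$ carried out above. (Alternatively, $(2)\Rightarrow(1)$ may be organized as an induction on $\dim L$: for each vertex $v\in K$ the complex $\Lk(v,K)$ is full, hence by induction $\pi$‑convex, in the flag complex $\Lk(v,L)$, so $T_v(K)=C_0(\Lk(v,K))$ is convex in $T_v(L)$; one then argues that the geodesic $[a,b]_L$ between $a,b\in K$ cannot locally leave $K$, but making this precise leads to the same local analysis of $\overline{\ost}(v)$.)
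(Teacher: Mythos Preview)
Your proof is correct. For $(1)\Rightarrow(2)$ your inductive argument is the paper's contrapositive unwound: the paper takes a simplex $\sigma\in L\setminus K$ with $\partial\sigma\subset K$, picks a vertex $v$ and a point $y$ in the opposite face, and exhibits the geodesic $[v,y]$ of length $\pi/2$ whose interior lies in $\interior\sigma\subset L\setminus K$.

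For $(2)\Rightarrow(1)$ the routes genuinely differ. The paper argues by contradiction: given $[a,b]$ with $[a,b]\cap K=\{a,b\}$, it finds a vertex $v\notin K$ of the simplex carrying the germ of $[a,b]$ at $a$, lets $b'$ be the first point where $[a,b]$ exits $\ost(v,L)$ into $\clk(v,L)$, and then \emph{develops} $[a,b']\subset\cst(v,L)$ into the hemisphere $S^2_+$, simplex by simplex, following the template of \cite[Proof of Theorem~II.5.18]{BH}. The developed path is shown (in a separate claim) to be a local, hence global, geodesic of length $<\pi$ in $S^2_+$ with both endpoints on $\partial S^2_+$, forcing it into $\partial S^2_+$; but its interior break-points lie in $\interior S^2_+$, a contradiction. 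You instead write $K=\bigcap_{w\notin V(K)}(L\setminus\ost(w))$ and reduce to the $\pi$-convexity of each $L\setminus\ost(w)$, which you obtain from the $\mathrm{CAT}(1)$ comparison inequality plus the identification $\overline{\ost}(w)\cong C_1(\Lk(w,L))|_{r\le\pi/2}$. Both arguments bottom out in the same $S^2$-fact (a short geodesic in the closed upper hemisphere with equatorial endpoints is equatorial), reached by explicit unfolding in the paper and by a comparison triangle plus the cone structure in your version. Your decomposition is conceptually cleaner and isolates the obstruction to a single deleted vertex; the cost is the two structural lemmas you flag, which amount to the $\kappa=1$ analogue of \cite[Theorem~I.7.39]{BH} pushed to the full radius $\pi/2$. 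The paper's development stays closer to the mechanics of Gromov's flag criterion and avoids naming the cone isometry, at the price of more hands-on bookkeeping. (A cosmetic point: your claim $0<s_1<s_2<\ell$ can fail when an endpoint of $\gamma$ is at distance exactly $\pi/2$ from $w$, but the argument is unaffected.)
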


\begin{proof}
We first prove that (1) implies (2).
Suppose that 
$K$ is not full in $L$.
Then there is a simplex $\sigma$ of $L\backslash K$
such that $\partial \sigma$ is contained in $K$.
Pick a vertex $v$ of $\sigma$, and let $\tau$ be the codimension $1$ face of $\sigma$
that does not contain the vertex $v$.
Pick a point $y$ in the interior of $\tau$.
Then $d(v,y)=\pi/2$ and the interior of the geodesic segment $[v,y]$
is contained in the interior of $\sigma$.
Thus $[v,y]$ is not contained in $K$ 
though both $v$ and $y$ are contained in $K$.
Hence $K$ is not $\pi$-convex.

We next prove that (2) implies (1).
Suppose to the contrary that $K$ is not $\pi$-convex
though $K$ is a full subcomplex of $L$. 
Then there is a geodesic segment $[a,b]$ in $L$ of length $<\pi$
such that $a,b\in K$ but $[a,b]\not\subset K$.
If necessary, by replacing $[a,b]$ with a sub geodesic segment, 
we may assume $K\cap [a,b]=\{a,b\}$.
Let $\sigma$ be the simplex of $L$ 
whose interior intersects the germ of $[a,b]$ at $a$.
Then $\sigma$ is not a simplex of $K$.
Since $K$ is a full subcomplex of $L$ by the assumption,
there is a vertex $v$ of $\sigma$ that is not contained in $K$.
Let $\cst(v,L)$ (resp.\ $\ost(v,L)$) be the {\it closed star} (resp.\ {\it open star})
of $v$ in $L$,
i.e., the union of the simplices (resp.\ the interior of the simplices) of $L$
that contain $v$.
Note that $\cst(v,L)=\ost(v,L)\sqcup \clk(v,L)$,
where $\clk(v,L)$ is the {\it simplicial link} of $v$ in $L$,
i.e., the union of the simplices $\tau$ of $L$ 
such that $v\notin \tau$ and $
\{v\}\cup \tau$ is contained in a simplex of $L$.
Then $\ost(v,L) \cap K=\emptyset$ and therefore there is a point $b'\in (a,b]$
such that $b'\in \clk(v,L)$ and $(a,b')\subset \ost(v,L)$.

Case 1. $v\in (a,b')$.
Then $d(v,a)=d(v,b')=\pi/2$ and hence
$d(a,b)\ge d(a,b')=d(a,v)+d(v,b')=\pi$,
a contradiction.

Case 2. $v\notin (a,b')$.
We consider the \lq\lq development'' of $[a,b']\subset \cst(v,L)$ 
in the northern hemisphere $S^2_+$,
the closed ball of radius $\pi/2$ centered at the north pole $N=(0,0,1)$
in $S^2$,
that is defined as follows (cf.~\cite[Definition I.7.17]{BH}).
Let $a=y_0, y_1, \cdots, y_n=b'$ be points lying in $[a,b']$
in this order,
such that $(y_{i-1}, y_i)$ is contained in 
the interior of a simplex $\sigma(i)$ of $L$ for each $i$ ($1\le i\le n$).
Note that $\sigma(i)$ contains $v$ as a vertex.
Let $\bar{y}_0=(1,0,0), \bar{y}_1, \cdots, \bar{y}_n$ be the points in $S^2_+$ satisfying the
following conditions.
\begin{enumerate}
\item
$d_{S^2}(N,\bar{y}_i)=d_{\sigma(i)}(v,y_i)=d(v,y_i)$
and $d_{S^2}(\bar{y}_{i-1},\bar{y}_i)=d_{\sigma(i)}(y_{i-1},y_i)=d(y_{i-1},y_i)$
for each $i$.
\item
If $N, \bar{y}_{i-1},\bar{y}_{i}$ are not aligned,
the initial vectors of the geodesic segments $[N,\bar{y}_{i-1}]$ and $[N,\bar{y}_{i}]$
in $S^2_+$ occur in the order
of a fixed orientation of $S^2$.
\end{enumerate}
We call the union $\gamma:=\cup_{i=1}^n[\bar{y}_{i-1},\bar{y}_i]\subset S^2_+$ 
the {\it development} of $[a,b']\subset \cst(v,L)$ in $S^2_+$.
It should be noted that $n \ge 2$ and 
$\bar{y}_1, \cdots, \bar{y}_{n-1}$ are contained in $\interior S^2_+$.

\begin{claim}
$\gamma$ is a local geodesic in $S^2$.
\end{claim}

\begin{proof}
Though this is used without proof in
~\cite[the 4th paragraph in the proof of Theorem II.5.18]{BH}, 
we give a proof for completeness. 
If $\gamma$ is not a local geodesic, then 
$\length([\bar{y}_{i-1},\bar{y}_i]\cup [\bar{y}_i,\bar{y}_{i+1}])
>\length([\bar{y}_{i-1},\bar{y}_{i+1}])$ for some $i$.
Let $\bar{y}_i'$ be the intersection of 
the geodesic segment $[\bar{y}_{i-1},\bar{y}_{i+1}]$
and the maximal geodesic segment in $S^2_+$ emanating from $N$ and passing through $\bar{y}_i$.
Let $y_i'$ be the point in the maximal geodesic segment in $\sigma(i)\cap\sigma(i+1) \subset L$ emanating from $v$ and 
passing through $y_i$, such that $d(v,y_i')=d_{S^2}(N,\bar{y}_i')$.
Then we have the following isometries 
among spherical triangles.
\[
\Delta(v,y_{i-1},y_{i}')\cong \Delta(N,\bar{y}_{i-1},\bar{y}_i'),
\quad
\Delta(v,y_{i}',y_{i+1})\cong \Delta(N,\bar{y}_i',\bar{y}_{i+1})
\]
Hence the following hold.
\begin{align*}
\length([y_{i-1},y_{i}']\cup[y_{i}',y_{i+1}])
& =  \length([\bar{y}_{i-1},\bar{y}_i']\cup [\bar{y}_i',\bar{y}_{i+1}])\\
& =  \length([\bar{y}_{i-1},\bar{y}_{i+1}])\\
& <  
\length([\bar{y}_{i-1},\bar{y}_i]\cup [\bar{y}_i,\bar{y}_{i+1}])\\
& = 
\length([y_{i-1},y_{i}]\cup[y_{i},y_{i+1}])
= 
\length([y_{i-1},y_{i+1}])
\end{align*}
This contradicts the fact that 
$[y_{i-1},y_{i+1}] (\subset [a,b']\subset [a,b])$ is a geodesic.
\end{proof}

Since $\gamma$ is a local geodesic with length $\length (\gamma)<\pi$,
it is a geodesic in $S^2_+$ by~\cite[Proposition II.1.4(2)]{BH}.
Since $y_n=b'\in \clk(v,L)$, we see
$d(v,y_n)=\pi/2$ and so  $\bar{y}_n\in \partial S^2_+$.
Thus the endpoints $\bar{y}_0$ and $\bar{y}_n$
of the geodesic $\gamma\subset S^2_+$ are contained in $\partial S^2_+$.
Since $\length(\gamma) < \pi$, this implies $\gamma\subset \partial S^2_+$.
This contradicts the fact that 
$\bar{y}_1, \cdots, \bar{y}_{n-1}$ are contained in $\interior S^2_+$.
This completes the proof of Lemma~\ref{lem:pi-convex}.
\end{proof}

In addition to Lemma~\ref{lem:pi-convex},
we need Lemma~\ref{lem:relative-versions} below
which gives relative versions of two results included in~\cite{BH}
concerning the local shape of polyhedral complexes.

\begin{notation}
{\rm
For a vertex $v$ of a subcomplex $W$ of a cubed complex $X$,
the symbol $j:T_v(W)\to T_v(X)$ denotes the natural injective map
from the tangent cone $T_v(W)$ of the cubed complex $W$ 
into the tangent cone $T_v(X)$ of the cubed complex $X$. 
Note that $j$ is not necessarily an isometric embedding.
}
\end{notation}

\begin{lemma}
\label{lem:relative-versions}
Let $X$ be a finite dimensional cubed complex and $W$ a subcomplex of $X$.
Then the following hold. 

{\rm (1) (Relative version of~\cite[Theorem I.7.39]{BH})} 
Let $v$ be a vertex of $W$.
Then there is a natural isometry $\varphi$ from the open ball 
$B_X(v,1/2)$ in $X$
onto the open ball $B_{T_v(X)}(0,1/2)$ in the tangent cone $T_v(X)$ 
that carries $W\cap B_X(v,1/2)$ onto 
$j(T_v(W))\cap B_{T_v(X)}(0,1/2)$.

{\rm (2) (Relative version of~\cite[Lemma I.7.56]{BH})} 
Let $x$ and $y$ be points of $W$ contained in the same open cell of $W$.
Then, for sufficiently small $\epsilon>0$,
there exists a natural isometry between the open balls 
$B_X(x,\epsilon)$ and $B_X(y,\epsilon)$
in $X$
that carries $W\cap B_X(x,\epsilon)$ onto $W\cap B_X(y,\epsilon)$.
\end{lemma}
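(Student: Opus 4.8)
The plan is to deduce both assertions from their absolute counterparts, \cite[Theorem I.7.39]{BH} and \cite[Lemma I.7.56]{BH}, by observing that the isometries produced in the proofs of those results are \emph{natural}, in the sense that they are built cube by cube and hence automatically carry the subcomplex $W$ to the corresponding object. Thus the real content is bookkeeping --- making precise the sense in which these isometries respect the cube structure --- and this needs a little care because $X$ is only a cubed complex, so the projection $p\colon\hat X\to X$ need not be injective on a cube and $p^{-1}(v)$ may contain several points.

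For (1), recall the construction of the isometry $\varphi\colon B_X(v,1/2)\to B_{T_v(X)}(0,1/2)$ in \cite[Theorem I.7.39]{BH}. Given a point $q\in B_X(v,1/2)$, choose a cube $I^{n_\lambda}\times\{\lambda\}$ with a vertex $v_\lambda\in p^{-1}(v)$ such that $q\in p(I^{n_\lambda}\times\{\lambda\})$, set $t:=d_X(v,q)<1/2$, and let $u$ be the direction at $v$ towards $q$, viewed as a point of $\Lk(v,X)$; then $\varphi(q):=tu\in C_0(\Lk(v,X))=T_v(X)$. First I would recall from \cite{BH} that $\varphi$ is well defined, a bijection onto $B_{T_v(X)}(0,1/2)$, and an isometry. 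The relative statement is then immediate: $q$ lies in $W$ if and only if $q$ lies in the $p$-image of a cube of $W$, if and only if the direction $u$ lies in $\Lk(v,W)$, if and only if $tu$ lies in the image $j(T_v(W))$; hence $\varphi$ restricts to a bijection of $W\cap B_X(v,1/2)$ onto $j(T_v(W))\cap B_{T_v(X)}(0,1/2)$.

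For (2), let $C$ be the open cell of $W$ containing $x$ and $y$; since $W$ is a subcomplex of $X$, the closed cell $\closure C$ is also a cell of $X$, of dimension $k$ say. I would recall from the proof of \cite[Lemma I.7.56]{BH} that there is $\epsilon_0>0$ with the following property: for every point $z$ of the open cell $C$ and every $\epsilon$ with $\epsilon<\epsilon_0$ and $\epsilon<d_X(z,\closure C\setminus C)$, the cells of $X$ meeting $B_X(z,\epsilon)$ are exactly those containing $\closure C$, and there is a cube-by-cube isometry of $B_X(z,\epsilon)$ onto the $\epsilon$-ball about the origin in the metric product $\EE^{k}\times C_0(\Lk(\closure C,X))$, where $\Lk(\closure C,X)$ denotes the (finite dimensional, all-right spherical) link of the cell $\closure C$ in $X$. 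The key point is that this target depends only on $\closure C$, not on $z$. The same cube-by-cube identification carries $W\cap B_X(z,\epsilon)$ onto the $\epsilon$-ball about the origin in $\EE^{k}\times C_0(\Lk(\closure C,W))$, since the cubes of $W$ through $z$ correspond precisely to the simplices of the subcomplex $\Lk(\closure C,W)\subset\Lk(\closure C,X)$. Choosing $\epsilon$ admissible for both $x$ and $y$ and composing the identification at $x$ with the inverse of the one at $y$ produces the desired isometry $B_X(x,\epsilon)\to B_X(y,\epsilon)$ carrying $W\cap B_X(x,\epsilon)$ onto $W\cap B_X(y,\epsilon)$.

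The step I expect to be the main obstacle is not a single hard argument but the precise formulation of this naturality: one must check that the local isometries constructed in the proofs of \cite[Theorem I.7.39]{BH} and \cite[Lemma I.7.56]{BH} are genuinely cellular, i.e.\ that they send each cube of $X$ meeting the neighbourhood in question onto a standard piece of the model compatibly with face inclusions, so that restricting attention to the cubes contained in $W$ yields exactly $C_0(\Lk(v,W))$, respectively $\EE^{k}\times C_0(\Lk(\closure C,W))$. Since $X$ is only a cubed complex, this verification must be carried out in terms of the cubes $I^{n_\lambda}\times\{\lambda\}$, the projection $p$, and germs of geodesics at $v$ (respectively along the cell $\closure C$), rather than in terms of honest subcomplexes of $X$; but this is exactly the level of generality at which \cite{BH} operates, so no essentially new difficulty arises.
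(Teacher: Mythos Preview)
Your proposal is correct and follows essentially the same approach as the paper: both parts are deduced from the absolute results \cite[Theorem I.7.39]{BH} and \cite[Lemma I.7.56]{BH} by observing that the isometries constructed there are cellular, hence carry the subcomplex $W$ onto the expected subset. Your treatment of (2) routes through the explicit model $\EE^{k}\times C_0(\Lk(\closure C,X))$, whereas the paper simply notes that the isometry of \cite[Lemma I.7.56]{BH} restricts to an isometry on each closed cell containing $x$ and $y$ and hence preserves the union $W$ of such cells; these are the same argument at different levels of detail.
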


\begin{proof}
(1) By~\cite[Theorem I.7.39]{BH},
there is
a natural isometry from $B_X(v,1/2)$ onto $B_{T_v(X)}(0,1/2)$.
(The radius $1/2$ is the half of the length $1$ of the unit interval $I$,
and it corresponds to $\varepsilon(x)/2$ in~\cite[Theorem I.7.39]{BH}.)
The isometry is defined as follows (see~\cite[the first paragraph 
in the proof of Theorem I.7.16 in p.104]{BH}). 
If $x \in B_X(v,1/2)$ then there is a direction $u\in \Lk(v,X)$
such that $x$ is a distance $t<1/2$ along the geodesic issuing from $v$ in the direction $u$. 
(Here $u$ is uniquely determined by $x$ except when $x=v$,
i.e., $t=0$.)
Then $x \in B_X(v,1/2)$  is mapped to the point $tu\in B_{T_v(X)}(0,1/2)$.
By this definition of the isometry, 
we see that it carries $W\cap B_X(v,1/2)$ onto 
$j(T_v(W))\cap B_{T_v(X)}(0,1/2)$.

(2) By~\cite[Lemma I.7.56]{BH}, there is a natural isometry 
from $B_X(x,\epsilon)$ onto $B_X(y,\epsilon)$
that restricts to an isometry from 
$C\cap B_X(x,\epsilon)$ onto $C\cap B_X(y,\epsilon)$
for every closed cell $C$ of $X$ containing $x$ and $y$.
Obviously the isometry carries 
$W\cap B_X(x,\epsilon)$ onto $W\cap B_X(y,\epsilon)$.
\end{proof}

We now give a proof of the main Theorem~\ref{prop:local-convex}.

\begin{proof}[Proof of Theorem~\ref{prop:local-convex}]
Let $X$ be a finite dimensional
CAT(0) cubed complex and $W$ a subcomplex of $X$. 
Assume that $W$ is
combinatorially locally convex in $X$, i.e.,
$\Lk(v,W)$ is a full subcomplex of $\Lk(v,X)$
for every vertex $v$ of $W$.
Then we have the following claim.

\begin{claim}
\label{claim:convex-cone}
For any vertex $v$ of $W$,
the map $j:T_v(W)\to T_v(X)$ is an isometric embedding, and 
$j(T_v(W))$ is convex in $T_v(X)$.
\end{claim}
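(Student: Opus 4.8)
The plan is to read off both assertions of Claim~\ref{claim:convex-cone} from the cone distance formula of Definition~\ref{def:cone}, once we know that the geometric links of $W$ and $X$ at $v$ carry the same truncated metric. Recall that $T_v(X)=C_0(\Lk(v,X))$ and $T_v(W)=C_0(\Lk(v,W))$, and that $j$ sends a point $tu$ of $T_v(W)$, with $u\in\Lk(v,W)\subseteq\Lk(v,X)$, to the point $tu$ of $T_v(X)$.

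First I would prove that $d^{\pi}_{\Lk(v,W)}(u,u')=d^{\pi}_{\Lk(v,X)}(u,u')$ for all $u,u'\in\Lk(v,W)$. Since $X$ is CAT(0), $\Lk(v,X)$ is a flag complex by Proposition~\ref{thm:CAT01}(2), so by the implication (2)$\Rightarrow$(1) of Lemma~\ref{lem:pi-convex} (applied with $L=\Lk(v,X)$ and $K=\Lk(v,W)$) the subcomplex $\Lk(v,W)$ is $\pi$-convex in $\Lk(v,X)$. As $\Lk(v,W)$ carries the induced length metric, every path in it is a path of the same length in $\Lk(v,X)$, whence $d_{\Lk(v,W)}\ge d_{\Lk(v,X)}$ on $\Lk(v,W)$. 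If $d_{\Lk(v,X)}(u,u')<\pi$, then by $\pi$-convexity the geodesic segment $[u,u']$ of $\Lk(v,X)$ lies in $\Lk(v,W)$, which forces $d_{\Lk(v,W)}(u,u')=d_{\Lk(v,X)}(u,u')$; if instead $d_{\Lk(v,X)}(u,u')\ge\pi$, then also $d_{\Lk(v,W)}(u,u')\ge\pi$. In both cases the truncations at $\pi$ agree.

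The isometric embedding statement then follows at once: for $tu,t'u'\in T_v(W)$ the cone distance formula gives
\begin{align*}
d_{T_v(X)}(j(tu),j(t'u'))^2
&=t^2+{t'}^2-2tt'\cos d^{\pi}_{\Lk(v,X)}(u,u')\\
&=t^2+{t'}^2-2tt'\cos d^{\pi}_{\Lk(v,W)}(u,u')
=d_{T_v(W)}(tu,t'u')^2.
\end{align*}
For the convexity of $j(T_v(W))$ I would argue as follows. By Lemma~\ref{lem:W-CAT(0)}(1), $T_v(W)$ is a CAT(0) space, hence a geodesic space; and $T_v(X)=C_0(\Lk(v,X))$ is a CAT(0) space by Proposition~\ref{thm:CAT01}(1) (since $\Lk(v,X)$ is a flag complex, hence CAT(1) by Proposition~\ref{thm:CAT1}(2)), hence uniquely geodesic. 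Given $p,q\in T_v(W)$, the isometric embedding $j$ carries the geodesic segment $[p,q]$ of $T_v(W)$ to a geodesic segment of $T_v(X)$ joining $j(p)$ to $j(q)$; by uniqueness of geodesics in $T_v(X)$ this segment must be $[j(p),j(q)]$, so $[j(p),j(q)]=j([p,q])\subseteq j(T_v(W))$.

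The only genuine obstacle is the first step, and it is precisely there that Lemma~\ref{lem:pi-convex} does the work; one has to be careful to invoke the direction ``full $\Rightarrow$ $\pi$-convex'', combined with the elementary observation that passing to a subcomplex with the induced length metric never decreases distances, so that the two links truncate to a common metric at $\pi$. Once this is in hand, the isometric embedding of the cones and the convexity of the image are formal consequences of the cone distance formula and of the uniqueness of geodesics in CAT(0) spaces.
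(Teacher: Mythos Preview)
Your proof is correct and follows essentially the same approach as the paper's own argument: use Lemma~\ref{lem:pi-convex} to get $\pi$-convexity of $\Lk(v,W)$ in $\Lk(v,X)$, deduce that the truncated metrics agree so that $j$ is an isometric embedding via the cone distance formula, and then combine Lemma~\ref{lem:W-CAT(0)}(1) with uniqueness of geodesics in the CAT(0) space $T_v(X)$ to obtain convexity. You have merely spelled out more explicitly the step that the $d^\pi$-metrics coincide and the reason why $T_v(X)$ is uniquely geodesic, both of which the paper leaves implicit.
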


\begin{proof}
Let $v$ be a vertex of $W$.
Then, by the assumption and
Lemma~\ref{lem:pi-convex},
$\Lk(v,W)$ is $\pi$-convex in $\Lk(v,X)$.
This implies that the distance function $d_{\Lk(v,W)}^{\pi}$
on $\Lk(v,W)$
is equal to the restriction of 
the distance function
$d_{\Lk(v,X)}^{\pi}$ on $\Lk(v,X)$
to the subspace $\Lk(v,W)$.
Hence
$j:T_v(W)\to T_v(X)$
is an isometric embedding.
On the other hand, 
$T_v(W)$ is a CAT(0) space by Lemma~\ref{lem:W-CAT(0)}(1).
Hence, any two points of $T_v(W)$ are 
joined by a unique geodesic
in the metric space $T_v(W)$.
Its image in $T_v(X)$ is also a geodesic in the metric space $T_v(X)$,
because $j:T_v(W)\to T_v(X)$
is an isometric embedding.
Hence $j(T_v(W))$ is convex in $T_v(X)$ as desired.
\end{proof}

Now let $x$ be an arbitrary point in $W$.
Pick a vertex $v$ of the open cell of $W$ that contains $x$.
Then, by Lemma~\ref{lem:relative-versions}(2), we can find
a small real $\epsilon>0$ and  $x'\in B_X(v,1/2)$ with 
$B_X(x',\epsilon)\subset B_X(v,1/2)$, such that
$(B_X(x,\epsilon),W\cap B_X(x,\epsilon))$ is isometric to
$(B_X(x',\epsilon),W\cap B_X(x',\epsilon))$.
Recall the following isometry given by 
Lemma~\ref{lem:relative-versions}(1).
\[
\varphi:(B_X(v,1/2), W\cap B_X(v,1/2))\to
(B_{T_v(X)}(0,1/2), j(T_v(W))\cap B_{T_v(X)}(0,1/2))
\]
Since $B_X(x',\epsilon)\subset B_X(v,1/2)$,
we have the following identities.
\begin{align*}
\varphi(B_X(x',\epsilon))&=B_{T_v(X)}(\varphi(x'),\epsilon),\\
\varphi(W\cap B_X(x',\epsilon))&=j(T_v(W))\cap B_{T_v(X)}(\varphi(x'),\epsilon).
\end{align*}
Since $j(T_v(W))$ is convex in $T_v(X)$ 
by Claim~\ref{claim:convex-cone}
and since $B_{T_v(X)}(\varphi(x'),\epsilon)$ is 
convex in the CAT(0) space $T_v(X)$ by~\cite[Proposition II.1.4(3)]{BH}, 
these identities imply that
$\varphi(W\cap B_X(x',\epsilon))$ is convex in the convex subset 
$B_{T_v(X)}(\varphi(x'),\epsilon)$ of $T_v(X)$.
Since we have the isometries
\begin{align*}
(B_X(x,\epsilon),W\cap B_X(x,\epsilon))
&\cong
(B_X(x',\epsilon),W\cap B_X(x',\epsilon))\\
&\cong
(\varphi(B_X(x',\epsilon)),\varphi(W\cap B_X(x',\epsilon))),
\end{align*}
this in turn implies that 
$W\cap B_X(x,\epsilon)$ is convex in the convex subset $B_X(x,\epsilon)$
of $X$.
Hence $W\cap B_X(x,\epsilon)$ is convex in $X$,
completing the proof of the if part of Theorem~\ref{prop:local-convex}.

Though the only if part of Theorem~\ref{prop:local-convex}
may be trivial, we include a proof for completeness.
Suppose that $\Lk(v,W)$ is not a full subcomplex of $\Lk(v,X)$.
Then $\Lk(v,W)$ is not $\pi$-convex by Lemma~\ref{lem:pi-convex}, and so
there is a geodesic segment $[a,b]$ in $\Lk(v,X)$
such that $[a,b]\cap\Lk(v,W)=\{a,b\}$. 
Pick a small $t>0$ so that 
the geodesic $[ta,tb]$ in $T_v(X)$ is contained in the 
open ball $B_{T_v(X)}(0,1/2)$.
(In fact, any positive $t<1/2$ works.)
Since the geodesic $[ta,tb]$ intersects $j(T_v(W))$
only at the endpoints,
the inverse image of $[ta,tb]$ by the isometry
$\varphi$ in  Lemma~\ref{lem:relative-versions}(1)
is
a geodesic in $B_X(v,1/2)$ 
that intersects $W$ only at the endpoints.
Hence $W$ is not locally convex.
\end{proof} 


\section{Late additions}
\label{sec:late-addition}

Immediately after submission of the first version of this note
to the arXiv,
we were informed by Ian Leary that
his paper \cite{Leary} is relevant to the note.
In fact, Appendix B of the paper includes 
a very simple proof of the main theorem, Theorem~\ref{thm:convex}.
Moreover, he extends the theorem to the infinite dimensional case.
His aim in that appendix,
suggested by Michah Sageev, was to establish Gromov's 
flag criterion~\cite[Theorem II.5.20]{BH} 
for infinite dimensional cubical complexes,
and he needed the above result in his argument.
He also informed us that
Yael Algom-Kfir gave a proof of 
Gromov's flag criterion for infinite dimensional cubical complexes 
in her master thesis 
under the supervision of Sageev.

Shortly after correspondence with Leary,
we learned from Takuya Katayama that
the main technical result, Theorem~\ref{prop:local-convex},
which plays a key role in the proof of Theorem~\ref{thm:convex},
immediately follows from 
the result \cite[Theorem 1(2)]{Crisp-Wiest} due to Crisp and Wiest.
He also informed us of the paper~\cite{Farley} by Farley,
which includes a proof of the fact that a hyperplane in a CAT(0) cubed complex $X$
is convex and that it divides $X$ into two convex subspaces.
(To be precise, it is an immediate consequence
of \cite[Section 4]{Farley}, and it is also proved by a slight 
modification of \cite[Proof of Theorem 4.4]{Farley}.
The fact plays a key role in our paper \cite{SS}.
The desire to give its proof  
was the real motivation of this note,
because we did not know a reference for that fact
though we knew its combinatorial version 
established by Sageev \cite[the first line in p.612]{Sageev}).

We thank Ian Leary and Takuya Katayama for these invaluable informations.
Though we do not intend to submit this note to a journal,
we upload this revised version to the arXiv for our record.
In this added section of the revised version, we give 
(i) a simple proof of the main result 
following the arguments by 
Leary~\cite[Proof of Theorems B.7 and B.9]{Leary}, 
(ii) a proof of an assertion in the proof in  
Crisp-Wiest~\cite[Theorem 1(2)]{Crisp-Wiest}
whose proof is omitted, 
and 
(iii) 
a proof of the convexities of
hyperplanes and the half-spaces bounded by hyperplanes in CAT(0) cubed complexes by using 
Theorem~\ref{thm:convex},
and its much simpler proof by using 
Farley's work \cite{Farley}.

\subsection{A simple proof of Theorem~\ref{thm:convex}
due to Leary}

As we note in the above, it turned out that
a very simple proof of Theorem~\ref{thm:convex}
(and so that of Theorem~\ref{prop:local-convex})
had already been given by Leary~\cite[Theorem~B.9]{Leary}.
The key idea is to consider the double of $X$ along $W$.
We include the proof following his arguments in
\cite[Proof of Theorems B.7 and B.9]{Leary}.

\begin{proof}[Proof of 
Theorem~\ref{thm:convex} following Leary~\cite{Leary}]
Since the only if part is obvious, we prove the if part.
Let $X *_W X$ be the double of $X$ along $W$, 
that is, the quotient space obtained from two copies of $X$
by identifying the two copies of $W$.
Each of the two inclusions $X\to X *_W X$ is a map
that does not increase distance
(since a rectifiable path in $X$
is mapped to a rectifiable path in $X *_W X$
of the same length).
The composite map 
\[
X \to X *_W X \to X *_X X=X
\]
is the identity, and hence each inclusion map
$X \to X *_W X$ is an isometric embedding.
Moreover, there is an isometric involution of $X *_W X$
swapping the copies of $X$
whose fixed point set is $W$.

It is easily seen that
the assumption that $W$ is combinatorially locally convex 
implies that
the link of each vertex of $X *_W X$
is a flag complex.
By Gromov's flag criterion~\cite[Theorem II.5.20]{BH},
it follows that $X *_W X$ is non-positively curved.
Since $X *_W X$ is simply-connected by van Kampen's theorem,
this implies that $X *_W X$ is a CAT(0) space by
the Cartan-Hadamard theorem~\cite[Theorem II.4.1(2)]{BH}.

Given any two points $x, y\in W$,
consider the unique geodesic $[x,y]$ in the CAT(0) space $X$.
Then its image in $X *_W X$
by each of the isometric embeddings is 
also a geodesic in  $X *_W X$.
If $[x,y]$ did not lie entirely inside $W$,
its image by the isometric involution
gives another geodesic in 
the CAT(0) space $X *_W X$ joining $x$ to $y$,
a contradiction.
Hence $[x,y]$ does lie entirely inside $W$, and so
$W$ is convex in $X$.
\end{proof}

Leary actually establishes an infinite dimensional versions of 
Theorem~\ref{thm:convex} and 
Gromov's flag criterion.
See his very careful and reader-friendly 
treatment~\cite[Appendices B and C]{Leary} for the details.

\subsection{A supplementary to Crisp-Wiest~\cite[Theorem 1(2)]{Crisp-Wiest}}
\label{subsec:2}

Theorem~\ref{prop:local-convex} is a direct consequence of 
the following theorem due to 
Crisp and Wiest~\cite{Crisp-Wiest}.

\begin{theorem}
\label{thm:CW}
{\rm \cite[Theorem 1(2)]{Crisp-Wiest}}
Let $X$ and $Y$ be finite dimensional cubed complexes and 
$f:X\to Y$ a cubical map. 
Suppose that $Y$ is locally CAT(0). Then the map $f$ is locally an isometric embedding if and only if, for every vertex 
$v\in X$, the simplicial map $f_v:\Lk(v, X) \to \Lk(f(v), Y)$ 
induced by $f$ is injective with image a full subcomplex of 
$\Lk(f(v),Y)$.
\end{theorem}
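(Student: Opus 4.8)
The plan is to run, for the cubical map $f\colon X\to Y$, the argument used above to prove Theorem~\ref{prop:local-convex} for an inclusion, with the natural injection $j\colon T_v(W)\to T_v(X)$ replaced by the cone map
\[
df_v:=C_0(f_v)\colon T_v(X)=C_0(\Lk(v,X))\longrightarrow C_0(\Lk(f(v),Y))=T_{f(v)}(Y)
\]
induced by the simplicial map $f_v\colon\Lk(v,X)\to\Lk(f(v),Y)$, and the pair $\Lk(v,W)\subset\Lk(v,X)$ replaced by $f_v$ itself. Two preliminary facts are needed. First, since $Y$ is locally CAT(0), Gromov's link condition (Proposition~\ref{thm:CAT01}(2)) together with Proposition~\ref{thm:CAT1}(2) shows that $\Lk(f(v),Y)$ is a finite dimensional all-right spherical flag complex, so that Lemma~\ref{lem:pi-convex} applies with $L=\Lk(f(v),Y)$. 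Second, a cubical map is $1$-Lipschitz, since it carries a rectifiable path to one of no greater length; hence $f(B_X(v,1/2))\subset B_Y(f(v),1/2)$, and when $f_v$ is injective — equivalently, $f$ collapses no cube containing $v$ — the restriction of $f$ to each cube through $v$ is an isometry onto its image, which yields the commutative square
\[
\varphi_Y\circ f|_{B_X(v,1/2)}=df_v|_{B_{T_v(X)}(0,1/2)}\circ\varphi_X ,
\]
where $\varphi_X,\varphi_Y$ are the natural isometries of \cite[Theorem I.7.39]{BH}; this is the relative version of Lemma~\ref{lem:relative-versions}(1) for the map $f$. In the same way one obtains the relative version of Lemma~\ref{lem:relative-versions}(2) (i.e.\ of \cite[Lemma I.7.56]{BH}): if $x$ and $y$ lie in the same open cell of $X$, then for small $\epsilon>0$ the restrictions of $f$ to $B_X(x,\epsilon)$ and to $B_X(y,\epsilon)$ are isometric, via isometries of both source and target balls.

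For the if part, assume $f_v$ is injective with full image for every vertex $v$ of $X$, and fix such a $v$. By the first fact and Lemma~\ref{lem:pi-convex}, $f_v(\Lk(v,X))$ is $\pi$-convex in $\Lk(f(v),Y)$; since $f_v$ is an injective simplicial map it is isometric on each simplex, hence a length-preserving local isometry, so this $\pi$-convexity forces
\[
d^\pi_{\Lk(v,X)}(a,b)=d^\pi_{\Lk(f(v),Y)}\bigl(f_v(a),f_v(b)\bigr)\qquad\text{for all }a,b\in\Lk(v,X).
\]
By the cone distance formula of Definition~\ref{def:cone}, $df_v\colon T_v(X)\to T_{f(v)}(Y)$ is then an isometric embedding, so the commutative square exhibits $f|_{B_X(v,1/2)}$ as a composite of isometric embeddings and hence as an isometric embedding. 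For an arbitrary point $x\in X$, pick a vertex $v$ of the open cell of $X$ that contains $x$; by the relative version of \cite[Lemma I.7.56]{BH} there are $\epsilon>0$ and a point $x'$ in that open cell with $B_X(x',\epsilon)\subset B_X(v,1/2)$ and with the restriction of $f$ to $B_X(x,\epsilon)$ isometric to the restriction of $f$ to $B_X(x',\epsilon)$; the latter restriction is an isometric embedding because $f|_{B_X(v,1/2)}$ is, hence so is the former. Thus $f$ is locally an isometric embedding.

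For the only if part, suppose $f$ is locally an isometric embedding. Then $f$ is locally injective, so $f$ collapses no cube at any vertex and $f_v$ is injective for every vertex $v$, whence the commutative square applies. Because the Euclidean cone is scale-invariant and $df_v$ commutes with the radial homotheties of $T_v(X)$ and $T_{f(v)}(Y)$, the hypothesis that $f$ is an isometry on a small ball about $v$ upgrades, through the square, to the statement that $df_v$ is an isometric embedding of all of $T_v(X)$ into $T_{f(v)}(Y)$. Since $T_v(X)$ is a geodesic space and, by Berestovskii's theorem (Proposition~\ref{thm:CAT01}(1)) applied to the CAT(1) complex $\Lk(f(v),Y)$, the space $T_{f(v)}(Y)$ is CAT(0) and hence uniquely geodesic, the image $df_v(T_v(X))$ is convex in $T_{f(v)}(Y)$; translating back through the cone construction, $f_v(\Lk(v,X))$ is $\pi$-convex in $\Lk(f(v),Y)$, and therefore a full subcomplex of $\Lk(f(v),Y)$ by Lemma~\ref{lem:pi-convex}. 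Together with the injectivity of $f_v$ already noted, this proves the only if part.

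The hard part will be establishing the relative versions of \cite[Theorem I.7.39]{BH} and \cite[Lemma I.7.56]{BH} for a general cubical map — above all, checking that when $f$ collapses no cube containing $v$, it carries the unit-speed geodesic issuing from $v$ in a direction $u$ to the unit-speed geodesic issuing from $f(v)$ in the direction $f_v(u)$, so that the displayed commutative square holds exactly. This is the one point at which the precise combinatorial definition of a cubical map is used; granting it, the rest of the proof is a transcription of the proof of Theorem~\ref{prop:local-convex}, the only new feature being that the hypotheses (in the if part) or the conclusions obtained so far (in the only if part) must be invoked to guarantee that $f$ does not collapse cubes at the vertices over which the local isometry property is being tested.
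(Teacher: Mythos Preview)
Your argument is correct and is precisely the generalisation of the proof of Theorem~\ref{prop:local-convex} from inclusions to cubical maps; the only point you leave somewhat implicit is that $T_v(X)$ is a geodesic space in the only-if direction, but this follows from \cite[Theorem~I.7.19 and Proposition~I.5.10]{BH} since $\Lk(v,X)$ is a finite dimensional $M_1$-complex.

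The paper's treatment (in Subsection~\ref{subsec:2}) takes a genuinely different route. Rather than transcribing the proof of Theorem~\ref{prop:local-convex}, it follows the original Crisp--Wiest outline and factors the statement through Charney's characterisation~\cite[Lemma~1.4]{Charney}: $f$ is locally an isometric embedding if and only if every induced map $f_x\colon S_x(X)\to S_{f(x)}(Y)$ on spaces of directions is $\pi$-distance preserving. The reduction from arbitrary points $x$ to vertices $v$ is then carried out not via \cite[Lemma~I.7.56]{BH} as you do, but via the spherical-join decomposition $S_x(X)\cong S^{k-1}*\Lk(C,X)$ together with $\Lk(C,X)\cong\Lk(\Lk(v,C),\Lk(v,X))$ and Charney's Lemmas~1.5 and~1.6; only the final step, passing between $\pi$-distance preserving and injective-with-full-image, uses Lemma~\ref{lem:pi-convex} in common with your proof. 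Your approach is more self-contained (it cites only \cite{BH} and reuses the machinery already built for Theorem~\ref{prop:local-convex}), while the paper's approach makes explicit the link-of-a-cell structure and shows how the vertex condition automatically propagates to all faces via the combinatorial identity~(b).
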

Here a {\it cubical map} is a continuous map
induced by a combinatorial map which takes 
the interior of each cube onto that of a cube
of the same dimension locally isometrically.
See~\cite[p.443]{Crisp-Wiest} for a precise definition,
and see~\cite[Definition 2.9]{Farley} 
for a simpler definition when
$X$ and $Y$ are cubical complexes.
The proposition implicitly assumes that $\Lk(v, X)$
is a simplicial complex, and we do assume this in the remainder.
If $f:X\to Y$ is equal to the inclusion map $i:W\to X$
under the setting of Theorem~\ref{prop:local-convex},
then the assumption is certainly satisfied.

Theorem~\ref{thm:CW} 
is proved in~\cite{Crisp-Wiest} 
as a consequence of the following three assertions.
\begin{enumerate}
\item
$f:X\to Y$ is locally an isometric embedding
if and only if, for every point $x\in X$, 
the map $f_x:S_x(X)\to S_{f(x)}(Y)$
between the space of directions induced by $f$
is {\it $\pi$-distance preserving}, i.e.,
$d_{S_{f(x)}(Y)}(f(u),f(u'))=\pi$ whenever $d_{S_x(X)}(u,u')=\pi$.
\item
The latter condition in (1) holds if and only if,
for every vertex $v\in X$,
the map $f_v:S_v(X)\to S_{f(v)}(Y)$ 
is {\it $\pi$-distance preserving}.
\item
The latter condition in (2) holds if and only if,
for every vertex $v\in X$,
the simplicial map $f_v:\Lk(v, X) \to \Lk(f(v), Y)$ 
induced by $f$ is injective with image a full subcomplex of 
$\Lk(f(v),Y)$.
\end{enumerate}
The assertion (1) directly follows from
Charney~\cite[Lemma 1.4]{Charney}.
The assertion (3) corresponds to
the key Lemma~\ref{lem:pi-convex} in this note.
As noted in \cite[the first paragraph in p.444]{Crisp-Wiest},
the proof of (3) (and our proof of Lemma~\ref{lem:pi-convex}) 
are based on the fact that
any locally geodesic segment in the closed half hemi-sphere $S^2_+$  
which has endpoints in $\partial S^2_+$ and intersect $\interior S^2_+$
has length at least $\pi$,
which in turn is a key element of Gromov's proof of 
his flag condition for non-positively curved cubed complexes.

For the assertion (2), Crisp-Wiest~\cite{Crisp-Wiest} only writes
that it is an easy consequence of~\cite[Lemmas 1.4 and 1.5]{Charney}.
Katayama taught us a detailed proof of (2),
which is essentially a refinement of Lemma~\ref{lem:relative-versions}.
We present another proof below. 

Let $x$ be a point of the cubed complex $X$ in
Theorem~\ref{thm:CW},
and let $C$ be the cell of $X$
whose relative interior contains $x$. 
Let $\Lk(C,X)$ be the subspace of $S_x(X)$,
the space of directions of $X$ at $x$,
consisting of those directions that are normal to $C$.
(If $C$ is a vertex, $\Lk(C,X)$ is defined to be $\Lk(v,X)$.) 
By the (implicit) assumption that $\Lk(v,X)$ is a simplicial complex
for every vertex $v$, we see that 
$\Lk(C,X)$ is naturally regarded as an
(all-right spherical) simplicial complex,
and the following hold.
\begin{enumerate}
\item[(a)]
If $k:=\dim C>0$, then
$S_x(X)$ is isometric to the spherical join $S^{k-1}*\Lk(C,X)$,
where $S^{k-1}$ is the unit
$(k-1)$-sphere (see~\cite[Lemma 2.5]{Charney-Davis}).
\item[(b)]
For a vertex $v$ of $C$, we have the following isomorphism
between simplicial complexes.
\[
\Lk(C,X)\cong\Lk(\Lk(v,C),\Lk(v,X))
\]
Here $\Lk(\Lk(v,C),\Lk(v,X))$ is the (simplicial) link
of the simplex $\Lk(v,C)$ of the simplicial complex $\Lk(v,X)$.  
\end{enumerate}

The assertion (2) is proved as follows.
By (a) and \cite[Lemma 1.5]{Charney},
$f_x:S_x(X)\to S_{f(x)}(Y)$
is $\pi$-distance preserving
if and only if 
$f_x:\Lk(C,X)\to \Lk(f(C),Y)$ is $\pi$-distance preserving,
where the second $f_x$ is the restriction of the first one
to the subspace $\Lk(C,X)\subset S_x(X)$.
By \cite[Lemma 1.6]{Charney}, the latter condition holds
if and only if $f_x:\Lk(C,X)\to \Lk(f(C),Y)$ is injective
and its image $f_x(\Lk(C,X))$ is a full subcomplex of $\Lk(f(C),Y)$.
By (b), the last condition holds for every $x\in X$,
if and only if, for every vertex $v$ and for every cell $C$ of $X$
containing $v$,
(i) the simplicial map $f_v:\Lk(v, X) \to \Lk(f(v), Y)$ 
is injective, 
(ii) $f_v(\Lk(v, X))$ is full in $\Lk(f(v), Y)$,
and (iii) $f_v(\Lk(C,X))$ is full in $\Lk(f(C), Y)$.
We can easily see that (iii) is 
a consequence of (i) and (ii).
Hence, we obtain the assertion (2).

\subsection{
Convexities of hyperplanes and their complementary halfspaces
in CAT(0) cubical complexes}

We give a proof of the following theorem
by using Theorem~\ref{thm:convex}.

\begin{theorem}
\label{thm:hyperplane}
Let $X$ be a finite dimensional CAT(0) cubical complex
and $\Sigma$ a hyperplane in $X$.
Then $\Sigma$ is convex in $X$.
Moreover, $\Sigma$ divides $X$ into two closed convex subspaces.
\end{theorem}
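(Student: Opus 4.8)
The plan is to deduce Theorem~\ref{thm:hyperplane} from Theorem~\ref{thm:convex} by verifying that a hyperplane $\Sigma$, together with each of its two closed complementary sides, satisfies the combinatorial local convexity condition (CLC) --- but there is a preliminary point to address. A hyperplane $\Sigma$ is not itself a subcomplex of $X$; rather, the midcube structure makes it a cubed complex whose cells sit across the cells of $X$. So first I would pass to the first cubical subdivision $X'$ of $X$. After subdivision, $\Sigma$ is isotopic to (indeed, can be identified with) a genuine subcomplex $W_\Sigma$ of $X'$, namely the union of those subdivided cubes lying in the ``carrier'' that are images of midcubes; equivalently, $W_\Sigma$ is the full subcomplex on the set of new vertices of $X'$ that are midpoints of edges of $X$ dual to $\Sigma$. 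Since subdivision changes the metric only by a global rescaling (cf.~\cite[Lemma I.7.48]{BH}), convexity in $X'$ is equivalent to convexity in $X$, so it suffices to work in $X'$. The same remark reduces the ``halfspace'' statement to showing that each closed side $H_\Sigma^{\pm}$, which \emph{is} already a subcomplex of $X'$ (the full subcomplex on the vertices of $X$ lying on that side of $\Sigma$ together with the midpoint vertices on $\Sigma$), is convex.

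Second, I would check connectedness and (CLC) for $W_\Sigma$ and for $H_\Sigma^{\pm}$. Connectedness of each is standard: the hyperplane is connected because the cube complex is CAT(0), hence simply connected, and dual edges organize into a connected ``combinatorial hyperplane''; and each closed side is connected because $X$ is (and adding the separating midcube wall does not disconnect a side). For (CLC) at a vertex $u$ of $W_\Sigma$ (a midpoint of a dual edge $e$), I would unwind the local picture: the link of $u$ in $X'$ is the join of the link ``coming from the two halves of $e$'' (a pair of antipodal vertices, i.e.\ $S^0$) with $\Lk(e, X)$, the link of the edge $e$ in $X$; and $\Lk(u, W_\Sigma)$ is precisely the copy of $\Lk(e,X)$ sitting inside as $\{*\}*\Lk(e,X)$ for one of the two $S^0$-vertices --- wait, more carefully, $\Lk(u,W_\Sigma)\cong \Lk(e,X)$ embedded as a full subcomplex because $\Lk(e,X)$ is full in $\Lk(e,X)*S^0$. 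A symmetric, slightly more involved computation handles vertices of $H_\Sigma^{\pm}$: at an old vertex $v$ of $X$ lying on the $+$ side, $\Lk(v, X')$ is (a subdivision-flavored model of) $\Lk(v,X)$, and $\Lk(v, H_\Sigma^{+})$ is the subcomplex ``cut out by the hyperplanes through $v$ on the wrong side'' --- this is a full subcomplex because it is an intersection of closed stars/links of the relevant vertices, and fullness is inherited. The cleanest route here is to invoke the join decompositions (a) and (b) recorded in Late addition~2, applied to the carrier cells of $\Sigma$, to see that the relevant links are full subcomplexes of the ambient links.

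Third, having established (CLC), Theorem~\ref{thm:convex} gives at once that $W_\Sigma$ and each $H_\Sigma^{\pm}$ are convex in $X'$, hence (after undoing the rescaling) $\Sigma$ and each closed side are convex in $X$. Finally I would note $X = H_\Sigma^{+}\cup H_\Sigma^{-}$ and $H_\Sigma^{+}\cap H_\Sigma^{-} = \Sigma$, which is immediate from the definition of the sides of a hyperplane, completing the ``divides into two convex subspaces'' claim.

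The main obstacle I anticipate is purely bookkeeping rather than conceptual: making the identification of $\Sigma$ (and of the sides) with honest full subcomplexes of the subdivision $X'$ completely precise, and then correctly computing the links and recognizing them as full subcomplexes. Once the link computations are set up via the join formulas (a) and (b) from Late addition~2, fullness falls out because a factor $K$ of a join $K * S^0$ (or more generally $K * S^{k-1}$) is always a full subcomplex, and fullness passes to further links and to intersections; but one must be careful that $X$ is a \emph{cubical} (simple) complex here so that all links in question are genuinely simplicial, which is exactly the hypothesis of Theorem~\ref{thm:hyperplane}. No new curvature input is needed beyond Theorem~\ref{thm:convex}.
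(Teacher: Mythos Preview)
Your approach is essentially the paper's: pass to the first cubical subdivision $X'$, verify (CLC) for $\Sigma$ and for the two closed sides via the join decomposition $\Lk(v,X')\cong S^0*\Lk(v,\Sigma)$ (which relies on Sageev's non-self-intersection result, Theorem~\ref{thm:Sageev}), and then apply Theorem~\ref{thm:convex}. One bookkeeping slip worth fixing: the vertices of $\Sigma\subset X'$ are the barycenters of \emph{all} cubes in the carrier of $\Sigma$, not only the midpoints of dual edges, so your ``equivalently'' description of $W_\Sigma$ as the full subcomplex on edge-midpoints is wrong (that subcomplex is discrete, since no two edge-midpoints are adjacent in $X'$); once you compute the link at every such barycenter via the join formula, the argument goes through exactly as you outline.
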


Recall that a {\it cubical complex} is a cubed complex such that
each cell is isometric to a cube $I^{n_{\lambda}}$
and that the link of every vertex is a simplicial complex~
\cite[Example I.7.40(3)]{BH}.
By~\cite[Theorem C.4]{Leary}, every CAT(0) cubed complex is cubical.
A {\it hyperplane} in a CAT(0) cubical complex $X$ 
is a subspace of $X$ which is obtained as the union of 
a family of midcubes of cells (cubes) in $X$,
that satisfies a certain maximality condition.
(Here a {\it midcube} of a cube $I^n$
is the subspace of the form $I^{n_1}\times\{1/2\}\times I^{n_2}$
with $n_1+n_2=n-1$.)
For a precise definition of a hyperplane,
see~\cite[Definition 4.5]{Farley}
(cf.~\cite[Section 2.4]{Sageev},
\cite[Definition 2.2]{Haglund-Wise}).
The following fundamental theorem is proved by
Sageev~\cite[Theorem 4.10]{Sageev}.

\begin{theorem}
\label{thm:Sageev}
Suppose $X$ is a CAT(0) cubical complex and $\Sigma$ is a hyperplane in $X$.
Then $\Sigma$ does not self-intersect,
namely, for each cube of $X$, its intersection with $\Sigma$ 
is either empty or a single midcube.
Moreover, $X\setminus\Sigma$ has exactly two components.
\end{theorem}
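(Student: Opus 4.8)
The plan is to reduce the whole statement to Theorem~\ref{thm:convex} after passing to a cubical subdivision in which $\Sigma$ and the two halfspaces become subcomplexes. First I would invoke Theorem~\ref{thm:Sageev} to obtain that $X\setminus\Sigma$ has exactly two components $H_1$ and $H_2$, and that each cube of $X$ meets $\Sigma$ in at most one midcube. Writing $\overline{H_1}$ and $\overline{H_2}$ for the closures, we have $X=\overline{H_1}\cup\overline{H_2}$ with $\overline{H_1}\cap\overline{H_2}=\Sigma$, and $\overline{H_1},\overline{H_2}$ are connected (being closures of the connected sets $H_i$), as is $\Sigma$. Thus the assertion that $\Sigma$ divides $X$ into \emph{two} pieces is precisely the content of Theorem~\ref{thm:Sageev}, and what remains is to prove that each of $\Sigma$, $\overline{H_1}$, $\overline{H_2}$ is convex in $X$.

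Next I would pass to the first cubical subdivision $X'$, obtained by cutting each cube $I^n$ at the midpoint of every coordinate. Since $\Sigma$ is a union of midcubes, the subspaces $\Sigma$, $\overline{H_1}$, $\overline{H_2}$ are connected subcomplexes of $X'$. After rescaling by a factor of $2$ the complex $X'$ is a finite dimensional cubed complex whose metric agrees with that of $X$ (cf.~\cite[Lemma I.7.48]{BH}); in particular $X'$ is CAT(0) (hence cubical, so its links are simplicial), and a subcomplex is convex in $X'$ exactly when the corresponding subspace is convex in $X$. It therefore suffices to verify the condition (CLC) of Theorem~\ref{thm:convex} for each of the three subcomplexes.

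The core of the argument, and the step I expect to be the main obstacle, is the verification of (CLC): that $\Lk(v,W)$ is full in $\Lk(v,X')$ for every vertex $v$ of $W\in\{\Sigma,\overline{H_1},\overline{H_2}\}$. Because $X'$ is cubical, each $\Lk(v,X')$ is an all-right spherical \emph{simplicial} complex whose simplices correspond bijectively to the subdivided cubes $D$ of $X'$ having $v$ as a vertex, the vertices of a simplex being the edge-directions of $D$ at $v$. For $v\notin\Sigma$ the condition is automatic, since then $\Lk(v,W)=\Lk(v,X')$. For $v\in\Sigma$ I would use the non-self-intersection from Theorem~\ref{thm:Sageev}: every cube of $X$ at $v$ meets $\Sigma$ in a single coordinate midcube, and these fit together so that near $v$ there is a function $h$, linear in the cube coordinate transverse to $\Sigma$, with $\Sigma=\{h=0\}$, $\overline{H_1}=\{h\le 0\}$, $\overline{H_2}=\{h\ge 0\}$ and $h(v)=0$. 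Now let $\sigma$ be a simplex of $\Lk(v,X')$ all of whose vertices lie in $\Lk(v,W)$, and let $D$ be the corresponding cube. Each edge of $D$ at $v$ then stays in $W$; since $h$ is linear on $D$ and $h(v)=0$, the fact that no edge of $D$ leaves $W$ forces $h$ to keep the appropriate sign (respectively, to vanish) throughout $D$, whence $D\subset W$ and $\sigma\subset\Lk(v,W)$. This establishes fullness, so (CLC) holds.

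Finally, applying Theorem~\ref{thm:convex} to each of $\Sigma$, $\overline{H_1}$, $\overline{H_2}$ yields their convexity in $X'$, hence in $X$; together with Theorem~\ref{thm:Sageev} this shows that $\Sigma$ is convex and divides $X$ into the two closed convex subspaces $\overline{H_1}$ and $\overline{H_2}$. The delicate points in a fully rigorous write-up are the precise local model of $\Sigma$ as a single coordinate wall near $v$ with its well-defined transverse function $h$ (this is exactly where non-self-intersection is essential, and where one must check the midcubes in adjacent cubes glue consistently), and the explicit identification of $\Lk(v,X')$ and of the subcomplexes $\Lk(v,\Sigma)$, $\Lk(v,\overline{H_i})$ as all-right spherical complexes.
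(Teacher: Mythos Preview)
Your proposal is not a proof of Theorem~\ref{thm:Sageev} at all: in its very first step you \emph{invoke} Theorem~\ref{thm:Sageev} to obtain non-self-intersection and the two-component decomposition, which are precisely the assertions you are asked to establish. What you have written is a proof of Theorem~\ref{thm:hyperplane} (convexity of $\Sigma$ and of the closed halfspaces), and indeed your argument---pass to the first cubical subdivision $X'$, check (CLC) for $\Sigma$ and the $\overline{H_i}$, apply Theorem~\ref{thm:convex}---matches the paper's own proof of Theorem~\ref{thm:hyperplane} essentially line for line. But Theorem~\ref{thm:hyperplane} is a different statement, and the paper proves it \emph{assuming} Theorem~\ref{thm:Sageev}, exactly as you do.

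Note also that the paper does not give its own proof of Theorem~\ref{thm:Sageev}; it is quoted as a result of Sageev~\cite[Theorem~4.10]{Sageev} (with an alternative proof by Farley~\cite{Farley} mentioned later). A genuine proof requires a combinatorial argument in the cubical complex---for non-self-intersection one typically uses the parallelism relation on edges and the flag condition on links, and for separation one constructs a $\ZZ/2$-valued cocycle dual to $\Sigma$ or uses a disc-diagram/minimal-gallery argument---none of which appears in your proposal. Your local ``transverse coordinate'' $h$ already presupposes that $\Sigma$ meets each cube in a single midcube, so it cannot be used to establish that fact.
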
  

\begin{proof}[Proof of Theorem~\ref{thm:hyperplane}]
Let $X'$ be the first cubical subdivision of $X$.
Then the hyperplane $\Sigma$ is regarded as a subcomplex of $X'$.
Let $v$ be a vertex of $\Sigma\subset X'$.
Then, by using the first assertion of Theorem~\ref{thm:Sageev},
we see that 
$\Lk(v,X')$ is the spherical join (or the spherical suspension)
$S^0*\Lk(v,\Sigma)$. 
Hence $\Lk(v,\Sigma)$ is a full subcomplex of $\Lk(v,X')$.
By Theorem~\ref{thm:convex}, this implies that
$\Sigma$ is convex in $X'$, completing the proof of the first assertion.

To prove the second assertion, 
recall the second assertion of Theorem~\ref{thm:Sageev}, and 
let $X_1$ and $X_2$ be the closures of the components of 
$X'\setminus \Sigma$.
Regard $X_1$ and $X_2$ as subcomplexes of $X'$,
Then we can easily check that the link of each vertex $v$
of each of $X_1$ and $X_2$ is a full subcomplex of $\Lk(v,X')$.
Hence, $X_1$ and $X_2$ are convex in $X'$ by Theorem~\ref{thm:convex},
completing the proof of the second assertion.
\end{proof}

Finally, we recall Farley's results in \cite{Farley}
that immediately imply Theorem~\ref{thm:hyperplane}.
For every 
finite dimensional CAT(0) cubical complex $X$,
Farley constructs a cubical complex $\BB(X)$
and a map $\pi_{\BB}:\BB(X)\to X$ that satisfy the following conditions
for every component $B$ of $\BB(X)$.
\begin{enumerate}
\item
The map $\pi_{\BB}$ embeds $B$ isometrically into $X$
(\cite[Theorem 4.1]{Farley}). 
\item
There is a hight function $h:B\to [0,1]$,
such that, for each $t\in [0,1]$,
$B_t:=h^{-1}(t)$ is a closed convex set of $\BB(X)$.
The space $\pi_{\BB}(B_t)$ is a closed convex subset of $X$
(\cite[lemma 4.3(1)]{Farley}).
Moreover, $\pi_{\BB}(B_{1/2})$ is a hyperplane
(\cite[Definition 4.5]{Farley}).
\item
$\pi_0\times h:B \to B_0\times [0,1]$ is an isometry,
where $\pi_0$ is the projection onto the closed convex subspace $B_0$
(\cite[lemma 4.3(2)]{Farley}).
\item
Each subspace $\pi_{\BB}(B_t)$ ($0<t<1$)
separates $X$ into two open convex complementary half-spaces
(\cite[Theorem 4.3]{Farley}).
\end{enumerate}
Theorem~\ref{thm:hyperplane} immediately follows from
these results.
In fact, the convexity of a hyperplane
is included in (2),
and the convexity of closed half-spaces bounded by a hyperplane 
follows from the fact that
every such closed half-space is the intersection of 
a family of convex open half-spaces in (4). 
It can be also proved directly by a slight modification
of the final step of \cite[Proof of Theorem 4.4]{Farley}.

We note that the assertion (1) 
is obtained by a nice application of
Crisp-Wiest~\cite[Theorem 1(2)]{Crisp-Wiest},
which we explained in Subsection~\ref{subsec:2}.
Moreover, he observes that 
the result of Crisp-Wiest 
and so Theorem~\ref{thm:hyperplane} hold 
under the weaker assumption that 
the cubical complex $X$ is {\it locally finite-dimensional},
i.e., the link of each vertex is a finite-dimensional 
simplicial complex~\cite[Definition 2.1]{Farley}.  
According to \cite[Theorem A.6]{Leary}, 
this is a necessary and sufficient condition
for a cubical complex to be complete.



\begin{thebibliography}{99}
\bibitem{BH}
M. Bridson and A. Heafliger,
{\em Metric spaces of non-positive curvature},
Springer-Verlag (1999).

\bibitem{Bux-Witzel}
K.-W. Bux and S. Witzel
{\em Local convexity in CAT($\kappa$)-spaces},
arXiv:1211.1871 [math.MG].

\bibitem{Charney}
R. Charney,
{\em The Tits conjecture for locally reducible Artin groups},
Internat.\ J. Algebra Comput.\ {\bf 10} (2000), no.\ 6, 783--797. 

\bibitem{Charney-Davis}
R. Charney and M. Davis, 
{\em Singular metrics of nonpositive curvature on branched covers of Riemannian manifolds},
Amer.\ J. Math.\ {\bf 115} (1993), no.\ 5, 929--1009. 

\bibitem{Crisp-Wiest}
J. Crisp and B. Wiest
{\em Embeddings of graph braid and surface groups in right-angled Artin groups and braid groups},
Algebr.\ Geom. Topol.\ {\bf 4} (2004), 439--472. 

\bibitem{Farley}
D. Farley, 
{\em A proof of Sageev's theorem on hyperplanes in CAT(0) cubical complexes}, 
Topology and geometric group theory, 127--142, 
Springer Proc.\ Math.\ Stat., {\bf 184}, Springer, [Cham], 2016.


\bibitem{Gromov}
M. Gromov, 
{\em CAT($\kappa$)-spaces: construction and concentration},
Zap.\ Nauchn.\ Sem.\ S.-Peterburg.\ Otdel.\ Mat.\ Inst.\ Steklov.\ (POMI) {\bf 280} (2001), 
Geom.\ i Topol.\ {\bf 7}, 100--140, 299--300; 
reprinted in J. Math.\ Sci.\ (N.Y.) {\bf 119} (2004), no.\ 2, 178--200. 

\bibitem{Haglund}
F. Haglund, 
{\em Finite index subgroups of graph products}, 
Geom.\ Dedicata {\bf 135} (2008), 167--209. 

\bibitem{Haglund-Wise}
F. Haglund and D. Wise, 
{\em Special cube complexes},
Geom.\ Funct.\ Anal.\ {\bf 17} (2008), no.\ 5, 1551--1620. 

\bibitem{Leary}
I. J. Leary, 
{\em A metric Kan-Thurston theorem},
J. Topol.\ {\bf 6} (2013), no.\ 1, 251--284.

\bibitem{Nakajima}
S. Nakajima,
{\em \"{U}ber konvexe Kurven und Fl\"{a}chen},
T\^{o}hoku Math.\ J., {\bf 29} (1928), 227--230.


\bibitem{Pa}
A. Papadopoulos, 
{\em Metric spaces, convexity and nonpositive curvature}, 
IRMA Lectures in Mathematics and Theoretical Physics, {\bf 6}. 
European Mathematical Society (EMS), Z\"urich, 2005. xii+287 pp.

\bibitem{RC}
C. Ramos-Cuevas, 
{\em Convexity is a local property in $\rm CAT(\kappa)$ spaces}, 
Mexican mathematicians abroad: recent contributions, 
189--196, Contemp.\ Math., {\bf 657}, 
Aportaciones Mat., Amer.\ Math.\ Soc., Providence, RI, 2016.

\bibitem{Sageev}
M. Sageev, 
{\em Ends of group pairs and non-positively curved cube complexes}, 
Proc.\ London Math.\ Soc.\ (3) {\bf 71} (1995), no.\ 3, 585--617. 

\bibitem{SS}
S. Sakai and M. Sakuma
{\em Two-parabolic-generator subgroups of hyperbolic $3$-manifold groups},
arXiv:2302.11031 [math.GT].

\bibitem{Stack}
StackExchanges,
{\em Convex subcomplexes of CAT(0) cubical complexes},
\url{https://mathoverflow.net/questions/194235/convex-subcomplexes-of-cat0-cubical-complexes}

\bibitem{Tietze}
H. Tietze,
{\em \"{U}ber Konvexheit im kleinen und im gro{\ss}en und
\"{u}ber gewisse den Punkten einer Menge zugeordnete Dimensionaszahlen},
Math.\ Z. {\bf 28} (1928), 679--707.

\end{thebibliography}
\end{document}